\newcommand{\R}{{\mathbb R}}
\newcommand{\Z}{{\mathbb Z}}
\newcommand{\C}{{\mathbb C}}
\newcommand{\Sp}{{\mathbb S}}
\newcommand{\ds}{\displaystyle}
\newcommand{\be}{\begin{eqnarray}}
\newcommand{\ben}{\begin{eqnarray*}}
\newcommand{\en}{\end{eqnarray}}
\newcommand{\enn}{\end{eqnarray*}}
\newcommand{\ba}{\backslash}
\newcommand{\pa}{\partial}
\newcommand{\ov}{\overline}
\newcommand{\ddiv}{{\rm div\,}}
\newcommand{\g}{\gamma}
\newcommand{\G}{\Gamma}
\newcommand{\om}{\omega}
\newcommand{\la}{\lambda}
\newcommand{\wi}{\widetilde}
\newcommand{\hth}{\theta}
\newcommand{\hx}{\hat{x}}
\newtheorem{theorem}{Theorem}[section]
\newtheorem{proposition}[theorem]{Proposition}
\begin{document}
\title{\bf Inverse acoustic scattering problems with multi-frequency sparse backscattering far field data}
\author{Xia Ji\thanks{LSEC, NCMIS and Academy of Mathematics and Systems Science, Chinese Academy of Sciences,
Beijing 100190, China. Email: jixia@lsec.cc.ac.cn (XJ)},
\and
Xiaodong Liu\thanks{NCMIS and Academy of Mathematics and Systems Science,
Chinese Academy of Sciences, Beijing 100190, China. Email: xdliu@amt.ac.cn (XL)}}
\date{}
\maketitle

\begin{abstract}
The inverse acoustic scattering problems using multi-frequency backscattering far field patterns at isolated directions are studied.
The underlying object could be point like scatterers, small scatterers, extended inhomogeneities and obstacles.
A fast and robust direct sampling method is proposed for location and shape reconstruction of the underlying objects.
To our best knowledge, this is the first numerical method for inverse obstacle/medium scattering problems with the multi-frequency sparse backscattering data.
Some approximate uniqueness results have also been showed based on the weak scattering approximation and the Kirchhoff approximation. The theoretical basis of the direct sampling method is initially established based on these two approximations.
Numerical examples in two dimensions are presented to validity the effectiveness and robustness of the proposed direct sampling method.
In particular, the numerical reconstructions indicate that the direct sampling method also works very well beyond these two approximations. 

\vspace{.2in}
{\bf Keywords:} inverse scattering; multi-frequency; sparse; backscattering; direct sampling method.

\vspace{.2in} {\bf AMS subject classifications:}
35P25, 45Q05, 78A46, 74B05

\end{abstract}

\section{Introduction}

The inverse scattering theory has been a fast-developing area for the past forty years.
Applications of inverse scattering problems occur in many areas such as radar, nondestructive testing,
medical imaging, geophysical prospection and remote sensing. We refer to the standard monograph \cite{CK} for a research statement on the significant progress
both in the mathematical theories and the numerical approaches.

Majority of studies focuses on inverse time harmonic wave scattering problems at a fixed frequency.
Uniqueness for the inverse scattering problems can be established if the measurements are taken for all observation directions and all
incident directions.
Besides the traditional iterative methods, many non-iterative methods have also been proposed for shape reconstructions, see e.g.,
the well developed linear sampling method \cite{ColtonKirsch}, the factorization method \cite{Kirsch98,KirschGrinberg},
and various types of recently proposed direct sampling methods \cite{CCHuang,ItoJinZou,LiLiuZou,LiZou,LiuIP17, Potthast2010}.
In particular, the direct sampling methods inherit many advantages of the classical linear sampling method and factorization method, e.g., they are independent of any a priori information on the geometry and physical properties of the unknown objects.
The main feature of these direct sampling methods is that only the inner product of the measurements with some suitably chosen functions is involved in
the computation of the indicator. Thus these direct sampling methods are robust to noises and computationally faster than the classical sampling methods.
We refer to \cite{CCHuang,L4,LiLiuZou,LiZou,LiuIP17} for recent developments in this direction. In particular, their connection with the classical sampling methods has been
pointed out in \cite{LiuIP17}. However, up to now, the mathematical basis for the direct sampling methods is far less developed than the classical sampling methods.
The basic theoretical basis is the well known Funk-Hecke formula or the Helmholtz-Kirchhoff identity.
Thus, the same as the classical sampling methods, full-aperture data are needed for computation.
However, it is difficult to conduct an experiment to take measurements in all observation directions around
an unknown scatterer. From the practical point of view, we have only limited aperture data \cite{LiuSun}.
In limited aperture problem, the backscattering scenario is of particular interest, where one receiver and one
transmitter with a fixed location is used to collect data. The inverse backscattering problem has attracted the
attention of many researchers \cite{Bojarski,Devaney,EskinRalston2,EskinRalston3,HaddarKusiakSylvester,KR99,Langenberg,LiLiu,StefanovUhlmann}.

To make the inverse backscattering problem solvable, measurements should be taken with multiple frequencies.
Actually, in the last two decades, different multi-frequency methods have been proposed for inverse scattering problem.
These methods can be classified into two categories: iterative methods and direct methods.
The recursive linearization method (RLM) is an iterative method, which proceed via a continuation procedure with respect to frequency from low to high
\cite{BaoHouLi,BaoLiLinTriki,BaoLinTriki2, Chen, SiniThanh}.
An important feature of the RLM is that a fine reconstruction can be obtained without the need of a good initial guess.
A survey on the state of the art of the RLM can be found in \cite{BaoLiLinTriki}.
There are also many direct multi-frequency methods without using direct solvers, see e.g., the MUSIC (MUltiple-SIgnal-Classification) algorithm \cite{GriesmaierSchmiedecke} for locating small inhomogeneities, the Fourier method \cite{ZhangGuo},
the multi-frequency factorization method \cite{GriesmaierSchmiedecke-source}, and the direct sampling method \cite{AlaHuLiuSun} for source reconstructions,
the multi-frequency linear sampling method \cite{GuzinaCakoniBellis}, the eigenvalue method \cite{Sun2012IP} and an eigenfunction based scheme \cite{Liu2Wang2}
for obstacle reconstructions.

This paper is dedicated to a direct sampling method for inverse acoustic problem with multi-frequency sparse backscattering far field patterns.
The frequencies are located in some bounded band and the observation directions are sparse.
Of particular interest is what kind of information of the underlying scatterer can be recovered from the measurement at one or two observation directions.
To our best knowledge, this is the first numerical method for target reconstructions with multi-frequency sparse backscattering data.
Our method is motivated by a recent work \cite{AlaHuLiuSun},
where a direct sampling
method for source support reconstruction is proposed. The key observation for the source problem is that the far field measurement is just the Fourier
transform of the unknown source term.
We also refer to \cite{JL-elastic,JL-electromagnetic,JiLiuZhang-source} for the extensions to the inverse elastic and electromagnetic source problems
with multi-frequency sparse data.
Difficulties arise for the inverse obstacle/medium problems due to the nonlinearity between the measurement and the unknown scatterer.
To overcome the nonlinearity, we consider two linearized approaches: the weak scattering approximation and the Kirchhoff approximation.
The weak scattering approximation includes the point like scatterers, the small scatterers and the low contrast inhomogeneities.
The Kirchhoff approximation is applied for the extended obstacles. What we want to emphasize is that these two approximations are needed only for theoretical analysis. The numerical simulations show that our direct sampling method also works very well beyond these two approximations.

The remaining part of the work is organized as follows.
In the next section, we introduce three basic types of scattering objects: point like scatterers, penetrable inhomogeneities and extended obstacles.
We also introduce the corresponding inverse problem with three kinds of multi-frequency sparse backscattering far field data sets.
We then proceed in the Section 3 for some approximation uniqueness results based on the weak scattering approximation and the Kirchhoff approximation.
Section 4 is devoted to a direct sampling method with multi-frequency sparse backscattering data. The theory basis is also established.
The direct sampling method is then verified in Section \ref{NumExamples} by extensive examples in two dimensions.

\section{Inverse acoustic scattering with sparse backscattering data}\label{ExtendedObjects}

We begin with the formulations of the acoustic scattering problem. Let $k=\om/c>0$ be the wave number of
a time harmonic wave, where $\om>0$ and $c>0$ denote the frequency and sound
speed, respectively. In the whole paper, we consider multiple frequencies in a bounded band, i.e.,
\ben
k\in (k_{-}, k^{+}),
\enn
with two positive wave numbers $k_{-}$ and $k^{+}$.
Furthermore, let the incident field $u^{in}$ be a plane wave of the form
\be\label{incidenwave}
u^{in}(x)\ =\ u^{in}(x,\hth,k) = e^{ikx\cdot \theta},\quad x\in\R^n,
\en
where $\theta\in\Sp^{n-1}$ denotes the direction of the incident wave and $\Sp^{n-1}:=\{x\in\R^n:|x|=1\}$ is
the unit sphere in $\R^n$.

The three basic scattering objects are point like scatterers, penetrable medium and extended obstacle.

{\bf Point like scatterers.} The first case of our interest is the scattering by $M$ point like scatterers located at $z_1, z_2,\cdots, z_M\in\R^{n}$ in the homogeneous space $\R^{n},n=2,3$.
Recall the fundamental solution $\Phi(x,y), x,y\in \R^n, x\neq y,$ of the Helmholtz equation, which is given by
\be\label{Phi}
\Phi_k(x,y):=\left\{
         \begin{array}{ll}
         \ds\frac{ik}{4\pi}h^{(1)}_0(k|x-y|)=\frac{e^{ik|x-y|}}{4\pi|x-y|}, & n=3, \\
         \ds\frac{i}{4}H^{(1)}_0(k|x-y|), & n=2.
         \end{array}
         \right.
\en
Here, $h^{(1)}_0$ and $H^{(1)}_0$ are, respectively, spherical Hankel function and Hankel function of
the first kind and order zero.
By neglecting all the multiple scattering between the scatterers, the scattered field $u^{s}$ is approximately given by \cite{Foldy}
\be\label{uszm}
u^{s}(x,\hth,k)\approx\sum_{m=1}^{M}\tau_m u^{i}(z_m,\hth,k)\Phi_k(x,z_m).
\en
Here, $\tau_m\in\C$ is the scattering strength of the $m$-th target, $m=1,2,\cdots, M.$

{\bf Penetrable medium.}
Let $D\subset\R^n (n=2,\, 3)$ be an open and bounded domain with
Lipschitz boundary $\pa D$ such that the exterior $\R^n\ba\ov{D}$ is connected.
The simplest scattering of plane waves by penetrable medium is to find the total field $u=u^{in}+u^s$ such that
\be
\label{HemEqumedium}\Delta u + k^2 q u = 0\quad \mbox{in }\R^n,\\
\label{Srcmedium}\lim_{r:=|x|\rightarrow\infty}r^{\frac{n-1}{2}}\left(\frac{\pa u^{s}}{\pa r}-iku^{s}\right) =\,0,
\en
where $q=c_0^2/c^2$ is the refractive index given by the ratio of the square of the sound speeds, $c=c_0$ in the homogeneous background medium and $c=c(x)$
in the inhomogeneous medium. It is assumed that $q-1$ has compact support, which is denoted by $\ov{D}$.

{\bf Extended obstacle.}
The scattering of plane waves by impenetrable obstacle $D$ is to find the total field $u=u^{in}+u^s$ such that
\be
\label{HemEquobstacle}\Delta u + k^2 u = 0\quad \mbox{in }\R^n\ba\ov{D},\\
\label{Bc}\mathcal{B}(u) = 0\quad\mbox{on }\pa D,\\
\label{Srcobstacle}\lim_{r:=|x|\rightarrow\infty}r^{\frac{n-1}{2}}\left(\frac{\pa u^{s}}{\pa r}-iku^{s}\right) =\,0,
\en
where $\mathcal{B}$ denotes one of the following three boundary conditions
\ben
(1)\,\mathcal{B}(u):=u\quad\mbox{on}\, \pa D;\qquad
(2)\,\mathcal{B}(u):=\frac{\pa u}{\pa\nu}\quad\mbox{on}\ \pa D;\qquad
(3)\,\mathcal{B}(u):=\frac{\pa u}{\pa\nu}+i\la u\quad\mbox{on}\ \pa D
\enn
corresponding to the case when the scatterer $D$ is sound-soft, sound-hard and impedance type, respectively.
Here, $\nu$ is the unit outward normal to $\pa D$ and $\la$ is a positive constant.

The well-posedness of the direct scattering problems \eqref{HemEqumedium}--\eqref{Srcmedium} and \eqref{HemEquobstacle}--\eqref{Srcobstacle}
have been established and can be found in \cite{CK,Mclean}.
Every radiating solution of the Helmholtz equation has the following asymptotic
behavior at infinity \cite{KirschGrinberg, LiuIP17}
\be\label{0asyrep}
u^s(x,\theta,k)
=\frac{e^{i\frac{\pi}{4}}}{\sqrt{8k\pi}}\left(e^{-i\frac{\pi}{4}}\sqrt{\frac{k}{2\pi}}\right)^{n-2}
\frac{e^{ikr}}{r^{\frac{n-1}{2}}}\left\{u^{\infty}(\hx,\theta,k)
+\mathcal{O}\left(\frac{1}{r}\right)\right\}\quad\mbox{as }\,r:=|x|\rightarrow\infty,
\en
uniformly with respect to all directions $\hx:=x/|x|\in\Sp^{n-1}$.
The complex valued function $u^{\infty}=u^{\infty}(\hx,\theta,k)$ defined on $\Sp^{n-1}$
is known as the far field pattern with $\hx\in\Sp^{n-1}$ denoting the observation direction.

The inverse problem is concerned with the determination of the underlying objects with the far field patterns $u^{\infty}(\hx,\hth, k)$, $\hx,\hth\in \Sp^{n-1}$,
$k\in (k_{-}, k^{+})$. For some positive integer $l\in\Z$, define
\ben
\Theta_l:=\{ \theta_1,  \theta_2, \cdots, \theta_l |\,\theta_j\in \Sp^{n-1}, j=1,2,\cdots,l\},
\enn
which is a subset of $\Sp^{n-1}$ with finitely many directions.
In this paper, we consider the following three types of direction sets:
\begin{itemize}
  \item $\mathcal {A}_1:=\{(\hx,\hth)\in \Sp^{n-1}\times\Sp^{n-1} \,| \, \hx=-\hth,\,\forall \hth\in\Theta_l \}$;\\
        The first data set $\mathcal{A}_1$ is corresponding to the classical backscattering experiment, where the sensor plays the role of both the source and the receiver. In this setting, the measurements are taken by moving the sensor around the objects.
  \item $\mathcal {A}_2:=\{(\hx,\hth)\in \Sp^{n-1}\times\Sp^{n-1} \,| \, \hx=Q\hth,\,\forall \hth\in\Theta_l \}$;\\
        The second data set $\mathcal{A}_2$ is corresponding to a generalized backscattering experiment, where there are two sensors play the role of the source and the receiver, respectively. The measurements are then taken by moving these two sensors around the objects simultaneously.
  \item $\mathcal {A}_3:=\{(\hx,\hth)\in \Sp^{n-1}\times\Sp^{n-1} \,| \, \forall \hx\in\Theta_l, \,\mbox{and fixed } \hth \}$.\\
        The third data set $\mathcal{A}_3$ is corresponding to the other generalized backscattering experiment, where a fixed sensor plays the role of the source, and the other sensor plays the role of the receiver. The measurements are then taken by moving the receiver sensor around the objects.
\end{itemize}
Here, $Q$ is some fixed rotation in $\R^n$. In particular, the data set $\mathcal {A}_2$ reduces to the data set $\mathcal {A}_1$ if $Q=-I$, where $I$ is the identity matrix.

{\bf\em The inverse backscattering problem consists in the determination of the underlying object
from the far field patterns $u^{\infty}(\hx,\theta,k)$ for multiple frequencies $k\in (k_{-}, k^{+})$ at sparse pairs of directions $(\hx,\hth)\in \mathcal {A}_j$ with some $j\in \{1,2,3\}$.}

\section{Approximation uniqueness}

There are no uniqueness results for the inverse backscattering problems, in particular if the far field data is available at sparse directions. We still refer to \cite{HahnerKress,KR99} for some progress with full far field patterns are given as data. In this section, we investigate what kind of the information of the underlying object can be obtained by a knowledge of its multi-frequency far field patterns at one or two pairs of observation and incident directions.

We begin with the simple model with point like scatterers. From the asymptotic behavior of $\Phi_k(x,y)$ we deduce that the corresponding far field pattern is approximately given by \cite{KirschGrinberg}
\be\label{uinfzm}
u^{\infty}(\hx,\hth,k)\approx \wi{u}^{\infty}(\hx,\hth,k) := \sum_{m=1}^{M}\tau_m e^{ikz_m\cdot(\hth-\hx)},
\quad \hx, \hth\in \Sp^{n-1},\,k\in\{k_{-}, k^{+}\}.
\en

\begin{figure}[htbp]
\centering
\includegraphics[width=3in]{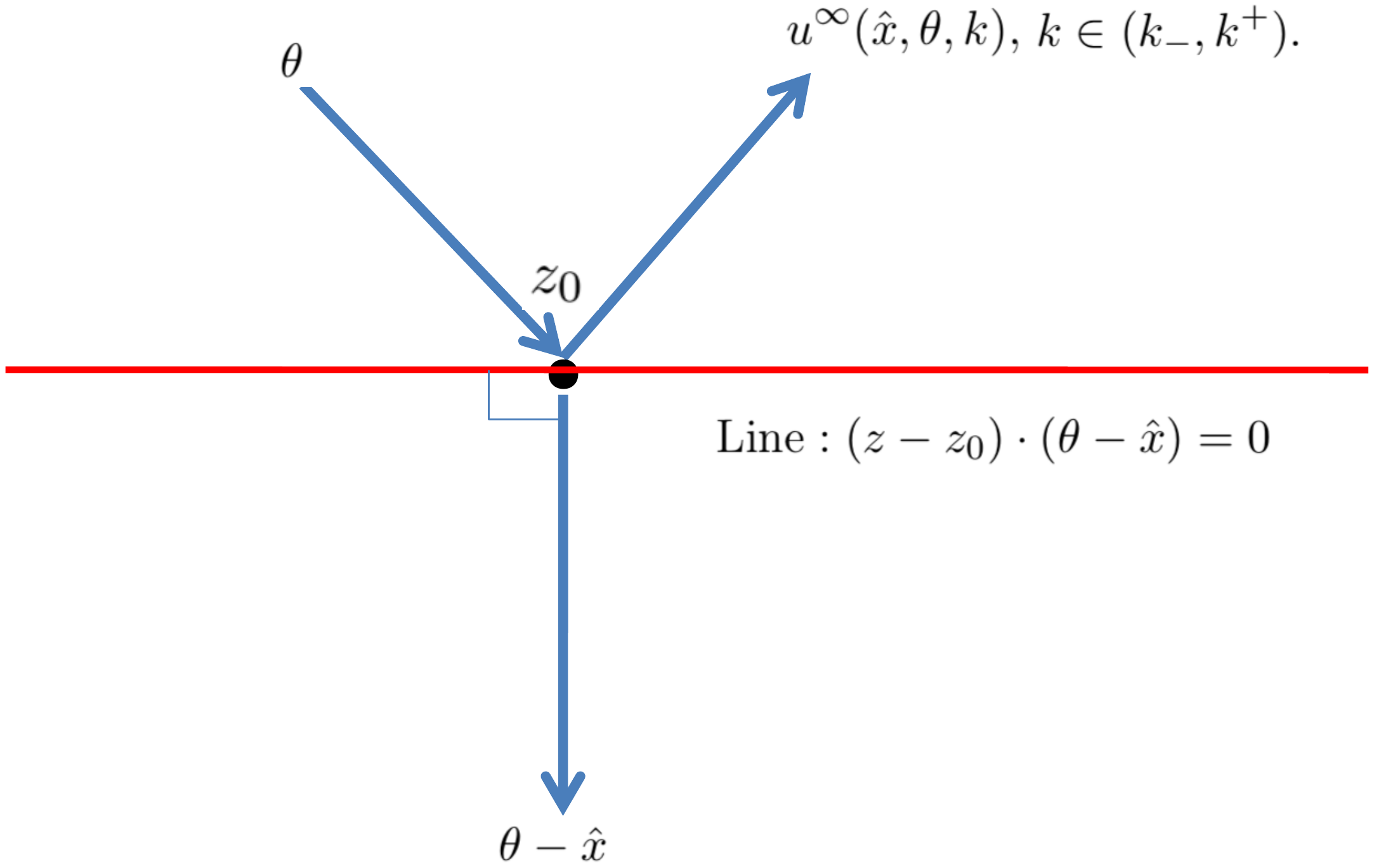}
\caption{Inverse scattering with the multi-frequency far field pattern $u^{\infty}(\hx,\hth,k)$ for fixed $\hx, \theta$.}
\label{pointline}
\end{figure}

As shown in Figure \ref{pointline}, for any fixed $(\hx,\theta)\in\mathcal {A}_j, \,j=1, 2, 3$ satisfying $\hx\neq\hth$,
define
\be\label{phi}
\phi:=\frac{\theta-\hx}{|\theta-\hx|}.
\en
Let
\be\label{Pialpha}
\Pi_{\alpha}:=\{y\in \R^n | y\cdot (\theta-\hx)+\alpha=0\}
\en
be the hyperplane with normal $\phi$ defined by \eqref{phi}. 

\begin{theorem}\label{LineUnique}
Let $(\hx,\theta)\in\mathcal {A}_j, \,j=1, 2, 3$ be fixed satisfying $\hx\neq\hth$.
The hyperplanes $\Pi_{\alpha_m}$, $\alpha_m:=-z_m\cdot (\theta-\hx)$, $m=1,2,\cdots,M$
are uniquely determined by the data $\wi{u}^{\infty}(\hx,\hth,k)$ for all $k\in(k_{-}, k^{+})$.
\end{theorem}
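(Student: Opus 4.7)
The plan is to exploit the fact that, for fixed observation-incidence pair $(\hx,\hth)$, the approximate far field
\[
\wi{u}^{\infty}(\hx,\hth,k)=\sum_{m=1}^{M}\tau_m e^{ikz_m\cdot(\hth-\hx)}=\sum_{m=1}^{M}\tau_m e^{-ik\alpha_m}
\]
is a function of the single real variable $k$, and it is a finite linear combination of complex exponentials whose frequencies are exactly the numbers $\alpha_m=-z_m\cdot(\theta-\hx)$. Recovering the hyperplane $\Pi_{\alpha_m}$ is equivalent to recovering the scalar $\alpha_m$, since $\Pi_{\alpha_m}$ is determined by its unit normal $\phi=(\theta-\hx)/|\theta-\hx|$ (which is known) and by its signed distance $\alpha_m/|\theta-\hx|$ from the origin. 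So the whole problem reduces to showing that the map $\{(\alpha_m,\tau_m)\}\mapsto \wi{u}^{\infty}(\hx,\hth,\cdot)$ is injective after grouping any coincident $\alpha_m$.

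First I would observe that $k\mapsto\wi{u}^{\infty}(\hx,\hth,k)$ extends to an entire function of $k\in\C$, so its values on the open interval $(k_{-},k^{+})$ determine it on all of $\R$ by analytic continuation. Next, suppose two configurations of hyperplane parameters, say $\{(\alpha_m,\tau_m)\}_{m=1}^{M}$ and $\{(\alpha_m',\tau_m')\}_{m=1}^{M'}$, produce the same data. Merging equal exponents and subtracting gives an identity of the form
\[
\sum_{j=1}^{J} c_j\, e^{-ik\beta_j}=0\qquad\mbox{for all } k\in\R,
\]
where the $\beta_j$ are the distinct values appearing in either list. The classical linear independence of distinct complex exponentials (proved, e.g., by differentiating $J-1$ times and taking a Vandermonde determinant, or by applying the Fourier transform and noting that $\sum c_j\delta_{\beta_j}=0$ in $\mathcal{S}'(\R)$) then forces $c_j=0$ for every $j$. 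Consequently the two lists must coincide after merging, and in particular the set of hyperplanes $\{\Pi_{\alpha_m}\}$ is uniquely determined.

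The argument has essentially no geometric obstacle; the one place that deserves care is the well-definedness of $\phi$, which requires $\hx\neq\hth$ (excluded in the hypothesis) so that $\theta-\hx\neq 0$ and the exponents $-k\alpha_m$ genuinely vary with $k$. A minor subtlety is that $\tau_m$ for different scatterers lying on the same hyperplane cannot be separated from this data alone: one only recovers the grouped weights $\sum_{m:\alpha_m=\alpha}\tau_m$. That is why the theorem asserts uniqueness of the hyperplanes rather than of the individual points $z_m$ and strengths $\tau_m$, and it is all that the exponential-sum structure can give at a single pair $(\hx,\hth)$.
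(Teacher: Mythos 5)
Your proof is correct, but it takes a genuinely different route from the paper's. The paper argues constructively: it defines the functional $I_{p}(z,\hx,\theta):=\int_{\R^{+}}\wi{u}^{\infty}(\hx,\hth,k)\,e^{-ikz\cdot(\theta-\hx)}\,dk$, inserts the exponential-sum form of the data, and formally evaluates the resulting $k$-integrals as Dirac deltas $\delta[(z_m-z)\cdot(\theta-\hx)]$, so that $I_{p}$ vanishes everywhere except on the hyperplanes $\Pi_{\alpha_m}$; this simultaneously proves the claim and previews the numerical indicator $I_1$ of Section 4. You instead prove injectivity abstractly by reducing to the linear independence of finitely many distinct complex exponentials $e^{-ik\beta_j}$ on $\R$ (via a Vandermonde determinant or the Fourier transform). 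Your version is arguably the more rigorous of the two: the paper's identification $\int_{\R^{+}}e^{ikt}\,dk=\delta(t)$ is only a formal, one-sided Fourier computation (the honest evaluation produces $\pi\delta$ plus a principal-value term), whereas linear independence of distinct characters is elementary and exact. What the paper's route buys in exchange is the explicit sampling functional on which the numerical method is built. Both arguments rely on the same analytic-continuation step from $(k_{-},k^{+})$ to all of $\R$, and both share the unstated caveat you correctly flag: if several $z_m$ lie on a common hyperplane and their grouped strength $\sum\tau_m$ vanishes, that hyperplane is invisible to the data under either argument.
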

\begin{proof}
Note that the data $\wi{u}^{\infty}(\hx,\hth,k)$ is an analytic function with respect to the wave number, thus we have $\wi{u}^{\infty}(\hx,\hth,k)$ for all frequencies in $\R^{+}$.
Define
\be\label{Ip1}
I_{p}(z, \hx, \theta):= \int_{\R^{+}} \wi{u}^{\infty}(\hx,\hth,k) e^{-ikz\cdot(\theta-\hx)}dk, \quad z\in\R^n.
\en
Inserting \eqref{uinfzm} into \eqref{Ip1}, we find that
\ben
I_{p}(z, \hx, \theta) = \sum_{m=1}^{M}\tau_m\int_{\R^{+}} e^{ik(z_m-z)\cdot(\theta-\hx)}dk = \sum_{m=1}^{M}\tau_m\delta [(z_m-z)\cdot(\theta-\hx)], \quad z\in\R^n,
\enn
where $\delta$ is the Dirac delta function.
The function $I_{p}(z, \hx, \theta)$ is zero everywhere except for the points located on the hyperplanes $\G_m, m=1,2,\cdots, M$.
Thus the hyperplanes $\Pi_{\alpha_m}$ are uniquely determined by the function $I_{p}(z, \hx, \theta)$ and thus also by the data $\wi{u}^{\infty}(\hx,\hth,k)$ for all $k\in (k_{-}, k^{+})$ at fixed pair of directions $(\hx,\hth)\in \Sp^{n-1}$.
\end{proof}

Note that due to the model with point like scatterers is still not completely well established, we have the above uniqueness with the approximate far field data
$\wi{u}^{\infty}(\hx,\hth,k)$.
Such an approximation uniqueness result only needs a single pair of observation and incident directions. With more directions be considered, the positions of the point like scatterers can be approximately determined.
We also remark that, based on the MUSIC algorithm, following the arguments given in \cite{GriesmaierSchmiedecke}, one can show that
the positions $z_m, m=1,2,\cdots, M$ can be uniquely determined by the backscattering far field patterns for finitely many frequencies.

Considering now the scattering by inhomogeneous medium, it is well known that the total field $u=u^{in}+u^{s}$ satisfies the Lippmann-Schwinger equation \cite{CK}
\ben
u(x,\hth,k)=e^{ikx\cdot\hth}+k^2\int_{\R^n}\Phi_k(x,y)[q(y)-1]u(y,\hth,k)dy, \quad x\in\R^n.
\enn
Under the condition that the value of $k^2\max_{x\in D}\int_{\R^n}|\Phi_k(x,y)[q(y)-1]|dy$ is small enouth, the Lippmann-Schwinger equation can be solved iteratively by the Neumann series, and the first iteration is the Born approximation given by
\ben
u_{B}(x,\hth,k)=e^{ikx\cdot\hth}+k^2\int_{\R^n}\Phi_k(x,y)[q(y)-1]u^{in}(y,\hth,k)dy, \quad x\in\R^n.
\enn
Then the corresponding far field pattern is given by
\be\label{Born}
u_{B}^{\infty}(\hx,\hth,k)&=&k^2\int_{\R^n}e^{ik(\hth-\hx)\cdot y}[q(y)-1]dy\cr
&=&k^2\int_{\R}\int_{\Pi_\alpha}[q(y)-1] e^{ik(\theta-\hx)\cdot y}ds(y)d\alpha \cr
&=&k^2\int_{\R}\hat{q}(\alpha) e^{-ik\alpha}d\alpha, \quad k\in (k_{-}, k^{+}),
\en
where $\Pi_\alpha$ is defined by \eqref{Pialpha} and
\be
\hat{q}(\alpha):=\int_{\Pi_\alpha}[q(y)-1]ds(y).
\en

As shown in Figure \ref{IlluminatedShadow}, for any fixed direction $\theta\in\Theta_l$, the $\theta$-strip hull of $D$ is defined by
\ben
S_{D}(\theta):=  \{ y\in \mathbb \R^{n}\; | \; \inf_{z\in D}z\cdot \theta \leq y\cdot \theta\leq \sup_{z\in D}z\cdot \theta\},
\enn
which is the smallest strip (region between two parallel hyper-planes) with normals in the directions $\pm \theta$ that contains $\ov{D}$.

\begin{figure}[htbp]
\centering
\includegraphics[width=3in]{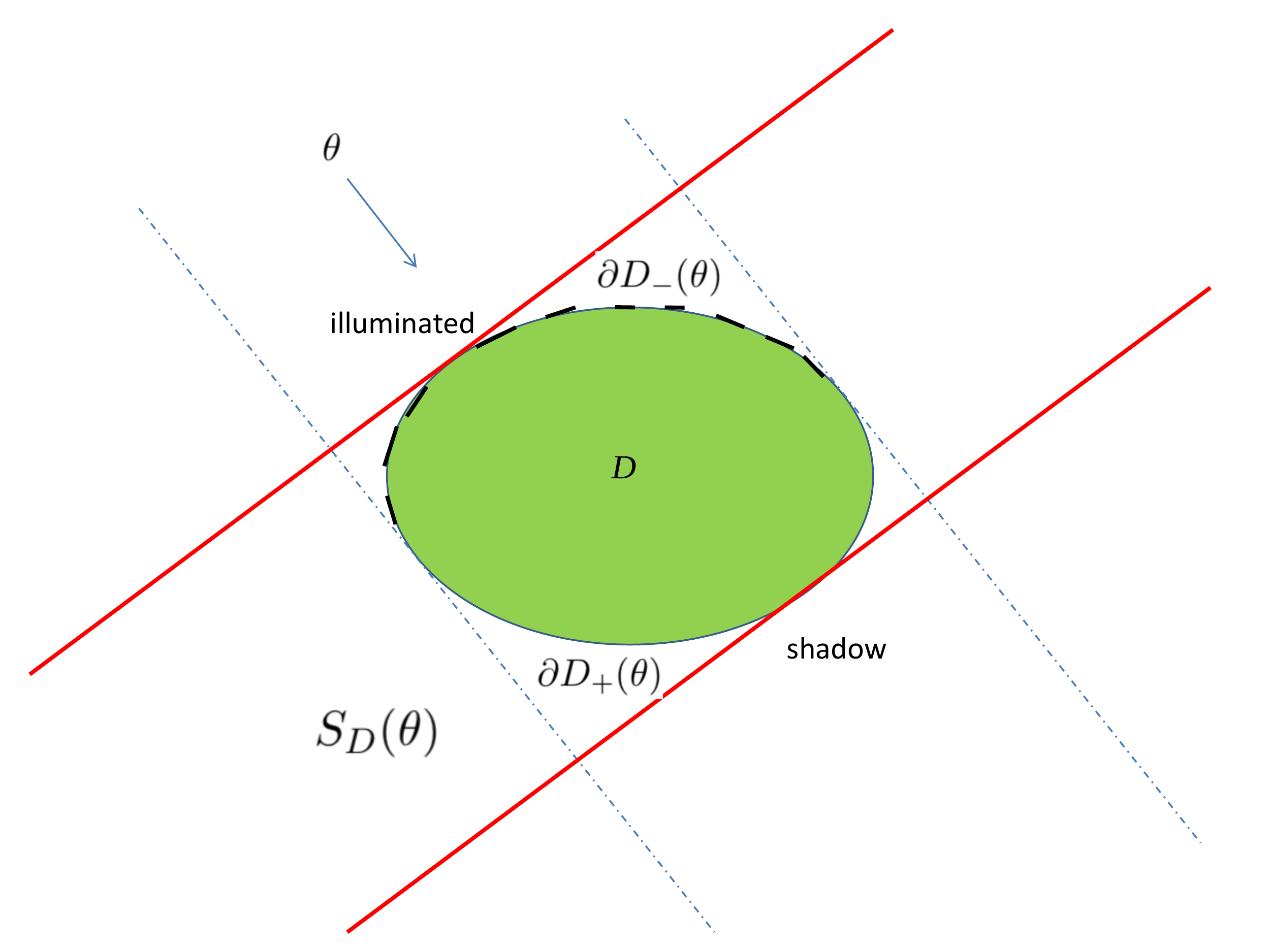}
\caption{The $\theta$-strip $S_{D}(\theta)$, illuminated part $\pa D_{-}(\theta)$ and shadow region $\pa D_{+}(\theta)$.}
\label{IlluminatedShadow}
\end{figure}

Following the arguments in the proof of Theorem 2.2 in \cite{AlaHuLiuSun}, we have the following uniqueness result for the inverse medium scattering problem.

\begin{theorem}\label{Strip-uniqueness-medium}
For any fixed $(\hx,\theta)\in\mathcal {A}_j, \,j=1, 2, 3$ satisfying $\hx\neq\hth$, let $\phi$ be defined as in \eqref{phi}.
If the set
\be\label{setmedium}
\{\alpha\in\R|\, \Pi_\alpha\subset S_D(\phi), \hat{q}(\alpha)=0\}
\en
has Lebesgue measure zero,
then the corresponding $\phi$-strip $S_D(\phi)$ is uniquely determined by the data $u_{B}^{\infty}(\hx,\hth,k)$ for all $k\in (k_{-}, k^{+})$.
\end{theorem}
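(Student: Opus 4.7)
The plan is to read equation \eqref{Born} as the statement that $u_B^\infty(\hat x,\theta,k)/k^2$ is the one-dimensional Fourier transform of the sliced contrast $\hat q(\alpha)$, to recover $\hat q$ from the measurements by analytic continuation in $k$ followed by Fourier inversion, and then to extract the strip $S_D(\phi)$ from the essential support of $\hat q$ with the help of the non-degeneracy assumption \eqref{setmedium}.

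First I would note that, since $q-1$ is supported in the bounded set $\overline{D}$, the slice integrals $\hat q(\alpha)$ vanish for all $|\alpha|$ sufficiently large and the map $k\mapsto u_B^\infty(\hat x,\theta,k)$ extends to an entire function of $k\in\C$. Hence the data on $(k_-,k^+)$ determines, by analytic continuation in $k$, the values on all of $\R$, and the last line of \eqref{Born} gives $u_B^\infty(\hat x,\theta,k)/k^2 = \int_{\R}\hat q(\alpha) e^{-ik\alpha}d\alpha$ for every $k\in\R$. The full Fourier transform of $\hat q$ is therefore known, and because $\hat q\in L^1(\R)\cap L^\infty(\R)$ (boundedness follows from $q-1\in L^\infty$ together with the bounded area of each slice), Fourier inversion recovers $\hat q$ almost everywhere from the data.

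To finish, I would analyse the support of $\hat q$. Whenever $\Pi_\alpha\cap D=\emptyset$ the integrand in the definition of $\hat q(\alpha)$ vanishes identically, so $\hat q(\alpha)=0$. Consequently $\hat q$ is supported in the closed interval $I:=\{\alpha\in\R:\Pi_\alpha\subset S_D(\phi)\}$, whose two endpoints correspond exactly to the bounding hyperplanes of the $\phi$-strip. Hypothesis \eqref{setmedium} asserts that $\hat q$ vanishes on $I$ only on a set of Lebesgue measure zero, so the essential support of $\hat q$ coincides with $I$ up to a null set. The endpoints of $I$ are therefore determined by the data, and through the equivalence $\Pi_\alpha\subset S_D(\phi)\Leftrightarrow \inf_{z\in D}z\cdot\phi\le -\alpha/|\theta-\hat x|\le \sup_{z\in D}z\cdot\phi$ these endpoints translate directly into the two bounding hyperplanes of $S_D(\phi)$, yielding the strip.

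\emph{Main obstacle.} The analytic-continuation/Fourier-inversion step is essentially standard and mirrors the source-problem argument in the proof of Theorem~2.2 of \cite{AlaHuLiuSun}; the genuine subtlety is the role of \eqref{setmedium}. Without that hypothesis $\hat q$ could vanish on a subinterval abutting an endpoint of $I$, in which case the data would only determine the convex hull of the essential support of $\hat q$, possibly a proper subset of $I$, and the bounding hyperplane on that side would be lost. Assumption \eqref{setmedium} is precisely what excludes this degenerate situation and makes the support argument sharp.
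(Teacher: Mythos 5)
Your proposal is correct and follows essentially the same route the paper takes (it proves the analogous Theorem~\ref{Strip-uniqueness} explicitly and refers to \cite{AlaHuLiuSun} for this one): analytic continuation in $k$, reading \eqref{Born} as the one-dimensional Fourier transform of $\hat q$, Fourier inversion, and then recovery of the strip from the essential support of $\hat q$. Your closing remark correctly pins down the role of the measure-zero hypothesis \eqref{setmedium}, which is exactly the ingredient that is not needed in the characteristic-function case of Theorem~\ref{Strip-uniqueness} (where $\hat\chi\ge 0$) but is essential here since $q-1$ may change sign.
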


Finally, we consider the case with extended obstacles. Difficulty also arises due to the nonlinearity from the obstacle to the far field pattern.
To solve this problem, we recall the other linearized method based on the Kirchhoff or physical optics approximation.

For a convex obstacle $D$, let
\be\label{illuminatedShadow}
\pa D_{-}(\theta):=\{x\in\pa D |\, \nu(x)\cdot \theta<0\} \quad\mbox{and}\quad \pa D_{+}(\theta):=\{x\in\pa D |\, \nu(x)\cdot \theta\geq0\}
\en
be the illuminated region and shadow region, respectively, with respect to the plane wave in the incident direction $\theta$.
For large wave number $k$, i.e., for small wavelengths, an obstacle $D$ locally may be considered at each point $x\in\pa D$ as a hyperplane
with normal $\nu(x)$. This leads to setting \cite{CK,LiLiu}
\be\label{Kirschhoff-soft}
\frac{\pa u}{\pa\nu} \approx \left\{
                            \begin{array}{ll}
                              2\frac{\pa u^{in}}{\pa\nu} , & \hbox{on $\pa D_{-}(\theta)$;} \\
                              0, & \hbox{on $\pa D_{+}(\theta)$.}
                            \end{array}
                          \right.
\en
if $D$ is sound soft, and
\be\label{Kirschhoff-hard}
u \approx \left\{
            \begin{array}{ll}
              2 u^{in}, & \hbox{on $\pa D_{-}(\theta)$;} \\
              0, & \hbox{on $\pa D_{+}(\theta)$.}
            \end{array}
          \right.
\en
if $D$ is sound hard.
Let $k_{-}>0$ be large enough such that the Kirchhoff approximation \eqref{Kirschhoff-soft}-\eqref{Kirschhoff-hard} holds.

It is well known that the far field pattern has the following representation \cite{KirschGrinberg}
\be\label{Far-representation}
u^{\infty}(\hx,\theta,k)=\int_{\pa D}\left\{u(y,\theta)\frac{\pa e^{-ik\hx\cdot y}}{\pa\nu(y)}-\frac{\pa u}{\pa\nu}(y,\theta)e^{-ik\hx\cdot y}\right\}ds(y),\quad \hx,\theta\in \Sp^{n-1},\quad k>0.
\en
Inserting the Kirchhoff approximations \eqref{Kirschhoff-soft}-\eqref{Kirschhoff-hard} into the above far field representation \eqref{Far-representation}, with the
help of the boundary condition \eqref{Bc},
the far field pattern in the back scattering direction is approximately given by \cite{CK}
\be\label{Kirschhoff}
u^{\infty}(\hx,\theta,k)\approx\g_{D}\int_{\pa D_{-}(\theta)}\frac{\pa e^{ik\theta\cdot y}}{\pa\nu(y)} e^{-ik\hx\cdot y}ds(y),\quad \theta\in \Sp^{n-1}, \quad k>k_{-},
\en
where
\ben
\g_{D} := \left\{
            \begin{array}{ll}
              -2, & \hbox{if $D$ is sound soft;} \\
              2, & \hbox{if $D$ is sound hard.}
            \end{array}
          \right.
\enn
An interesting observation from \eqref{Kirschhoff} is that the shadow region $\pa D_{+}(\theta)$ gives no contribution to the far field data
$u^{\infty}(\hx,\theta,k)$. Thus, it is impossible to reconstruct the shadow region $\pa D_{+}(\theta)$ from the far field data
$u^{\infty}(\hx,-\theta,k)$, in particular for high frequency case.
Replacing $\hx$ and $\theta$ by $-\hx$ and $-\theta$, respectively, we have
\ben
u^{\infty}(-\hx,-\theta,k)\approx \g_{D}\int_{\pa D_{+}(\theta)} \frac{\pa e^{-ik\theta\cdot y}}{\pa\nu(y)} e^{ik\hx\cdot y}ds(y),\quad \theta\in \Sp^{n-1}, \quad k>k_{-}.
\enn
Combining the last two equations we find
\ben
u^{\infty}(\hx,\theta,k)+\ov{u^{\infty}(-\hx,-\theta,k)} \approx \g_{D}\int_{\pa D}\frac{\pa e^{ik\theta\cdot y}}{\pa\nu(y)} e^{-ik\hx\cdot y}ds(y),\quad \theta\in \Sp^{n-1}, \quad k>k_{-}.
\enn
Furthermore, by the Gauss divergence theorem, we deduce that
\be\label{GeneralizedBojarski}
&&u^{\infty}(\hx,\theta,k)+\ov{u^{\infty}(-\hx,-\theta,k)}\cr
&\approx& \g_{D}\int_{ D}\ddiv[e^{-ik\hx\cdot y}\nabla e^{ik\theta\cdot y}]   d(y)\cr
&=&-k^2\hth\cdot(\hth-\hx)\g_{D}\int_{\R^n}\chi(y) e^{ik(\theta-\hx)\cdot y} d(y)\cr
&=:& U^{\infty}(\hx,\theta,k),\quad \hx, \theta\in \Sp^{n-1}, \quad k>k_{-},
\en
where $\chi$ is the characteristic function of the domain $D$.
This is the well known {\em Bojarski identity} \cite{Bojarski} if $\hx=-\hth$. We call \eqref{GeneralizedBojarski} the {\em generalized Bojarski identity.}
The {\em Bojarski identity}
\cite{Bojarski} indicates that, in the Kirchhoff approximation, the Fourier transform of the characteristic function of the scatterer can be completely determined from
the backscattering far field patterns for all observation directions and all positive wave numbers. Then, by inverting the Fourier transform one can determine the
location
and shape of the scatterers. However, this procedure suffers from two difficulties. Firstly, the Kirchhoff approximation is valid for high frequencies only, whereas the
inverse Fourier transform requires integration over all frequencies. Secondly, the inverse Fourier transform requires all observation directions, whereas in most practical
situations, measurements are only available for incomplete set of observation directions. We refer to \cite{Bojarski, Devaney, Langenberg} for the numerical implementations
and more discussions.

The following theorem gives an uniqueness result based on the data $U^{\infty}(\hx, \theta,k)$, which is an approximation of $u^{\infty}(-\theta,\theta,k)+\ov{u^{\infty}(\theta,-\theta,k)}$ for all $k\in (k_{-}, k^{+})$.

\begin{theorem}\label{Strip-uniqueness}
For any fixed $(\hx,\theta)\in\mathcal {A}_j, \,j=1, 2, 3$ satisfying $\hx\neq\hth$, let $\phi$ be defined as in \eqref{phi}.
Then the $\phi$-strip $S_D(\phi)$ is uniquely determined by the data $U^{\infty}(\hx, \theta,k)$ for all $k\in (k_{-}, k^{+})$.
\end{theorem}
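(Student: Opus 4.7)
The plan is to mirror the uniqueness argument for the medium case (Theorem \ref{Strip-uniqueness-medium}), substituting the generalized Bojarski identity \eqref{GeneralizedBojarski} for the Born identity \eqref{Born}. Both identities express the relevant measurement, up to an explicit prefactor, as the Fourier transform of a compactly supported function---here the characteristic function $\chi$ of $D$, and there the contrast $q-1$.

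The first step is to isolate the Fourier transform. The prefactor in \eqref{GeneralizedBojarski} is
\[
-k^{2}\g_{D}\,\hth\cdot(\hth-\hx) \;=\; -k^{2}\g_{D}(1-\hth\cdot\hx),
\]
which vanishes iff $\hth\cdot\hx=1$, i.e., iff $\hth=\hx$; the standing hypothesis $\hx\neq\hth$ excludes this case. Since $\g_{D}\in\{\pm 2\}$ and $k^{2}>0$, I may divide and define
\[
F(k)\;:=\;-\frac{U^{\infty}(\hx,\theta,k)}{k^{2}\g_{D}(1-\hth\cdot\hx)}\;=\;\int_{\R^{n}}\chi(y)\,e^{ik(\theta-\hx)\cdot y}\,dy,\qquad k\in(k_{-},k^{+}).
\]

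Because $D$ is bounded, the right-hand side extends to an entire function of $k\in\C$, so the values of $F$ on the nonempty open interval $(k_{-},k^{+})$ determine $F$ on all of $\R$ by analytic continuation. Writing $\phi=(\theta-\hx)/|\theta-\hx|$ and $\tau=k|\theta-\hx|$, and slicing along the hyperplanes $\Pi_{t}=\{y:y\cdot\phi=t\}$ via Fubini, I reduce $F$ to a one-dimensional Fourier transform:
\[
F(k)\;=\;\int_{\R}g(t)\,e^{i\tau t}\,dt,\qquad g(t)\;:=\;\int_{\Pi_{t}\cap D}ds(y).
\]
The function $g$ is nonnegative, continuous, and compactly supported, and is strictly positive on the open interval $(\inf_{y\in D}y\cdot\phi,\,\sup_{y\in D}y\cdot\phi)$: since $D$ is open with Lipschitz boundary, every interior slice has positive $(n-1)$-dimensional measure. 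As $\tau$ ranges over $\R$, one-dimensional Fourier inversion recovers $g$ uniquely from $F$, and the convex hull of the support of $g$ is precisely $[\inf_{y\in D}y\cdot\phi,\,\sup_{y\in D}y\cdot\phi]$. By the definition of $S_{D}(\phi)$, this convex hull encodes exactly the $\phi$-strip.

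The main potential obstacle is that the data is only available on a bounded band of frequencies, whereas Fourier inversion needs all $\tau\in\R$. This is resolved by the analytic extension noted above, which is the only nontrivial ingredient---the remaining steps are a projection-slice calculation identical in structure to the proof of Theorem \ref{Strip-uniqueness-medium}, with the simplification that the condition on the zero set required there is automatic here because the sliced quantity is the volume of an open cross-section.
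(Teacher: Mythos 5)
Your proposal is correct and follows essentially the same route as the paper's proof: divide $U^{\infty}$ by the explicit prefactor to expose the Fourier transform of $\chi$, extend from the band $(k_{-},k^{+})$ to all real frequencies by analyticity, slice over the hyperplanes $\Pi_{\alpha}$ to reduce to a one-dimensional Fourier transform of the cross-sectional measure, invert, and read off the strip from the support. Your two added remarks --- that $\hth\cdot(\hth-\hx)=1-\hth\cdot\hx$ is nonzero precisely because $\hx\neq\hth$, and that the cross-sectional function is automatically positive on the interior slices (so no analogue of the zero-set hypothesis \eqref{setmedium} is needed) --- are correct details that the paper leaves implicit.
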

\begin{proof}
Recall the {\em generalized Bojarski identity} \eqref{GeneralizedBojarski}, we have
\be\label{UinfFourier}
V^{\infty}(\hx,\theta,k)
&:=&\frac{U^{\infty}(\hx,\theta,k)}{-k^2\hth\cdot(\hth-\hx)\g_{D}}\cr
&=&\int_{\R^n}\chi(y) e^{ik(\theta-\hx)\cdot y} d(y)\cr
&=&\int_{\R}\int_{\Pi_\alpha}\chi(y) e^{ik(\theta-\hx)\cdot y}ds(y)d\alpha \cr
&=&\int_{\R}\hat{\chi}(\alpha) e^{-ik\alpha}d\alpha, \quad k\in (k_{-}, k^{+}),
\en
where
\be
\hat{\chi}(\alpha):=\int_{\Pi_\alpha}\chi(y)ds(y).
\en
Note that $V^{\infty}(\hx,\theta,k)$ is an analytic function with respect to the wave number $k$. Thus we have the data $V^{\infty}(\hx,\theta,k)$ for all $k\in\R$ by analyticity. The equality \eqref{UinfFourier} implies that $V^{\infty}(\hx,\theta,k)$ is the Fourier transform of $\hat{\chi}$. Using inverse Fourier transform, we deduce that $\hat{\chi}$ can be uniquely determined.
Note that
\ben
S_D(\phi)=\ov{\bigcup_{\alpha\in \R} \{\Pi_{\alpha}| \hat{\chi}(\alpha)\neq 0\}},
\enn
which implies that the strip $S_D(\phi)$ is uniquely determined by $\hat{\chi}$, and thus by the data $U^{\infty}(\hx,\theta,k)$ for all $k\in (k_{-}, k^{+})$ at a single pair of directions $(\hx,\theta)\in\mathcal {A}_j, \,j=1, 2, 3$. The proof is complete.
\end{proof}

Note that the approximation far field patterns given in \eqref{uinfzm}, \eqref{Born} and \eqref{GeneralizedBojarski}, respectively, for three different problems are linear with respect to the support of the characteristic function for the underlying object. This is the basic observation used for proof of the approximation uniqueness theorems \ref{LineUnique}-\ref{Strip-uniqueness}.
Although we can not give the rigorous and exact statement of uniqueness results, these approximation uniqueness theorems are enough to ensure the effectiveness of the subsequent numerical methods, in particular noting the fact that the measurement errors can not be avoided in the practical applications.

\section{Direct sampling method}

Motivated by the approximation far field representations given in \eqref{uinfzm}, \eqref{Born} and \eqref{GeneralizedBojarski}, with the three different generalized sparse data sets $\mathcal {A}_j, \,j=1,2,3$, we introduce the following two indicators
\be\label{I1}
I^{(j)}_1(z)&:=&\sum_{(\hx,\theta)\in\mathcal {A}_j}I_1(z,\hx,\theta)\cr
&:=&\sum_{(\hx,\theta)\in\mathcal {A}_j}\left|\int_{k_{-}}^{k^{+}}u^{\infty}(\hx,\theta,k) e^{-ikz\cdot(\theta-\hx)}dk\right|,\quad z\in\R^n,
\en
and
\be\label{I2}
I^{(j)}_2(z)&:=&\sum_{(\hx,\theta)\in\mathcal {A}_j}I_2(z,\hx,\theta)\cr
&:=&\sum_{(\hx,\theta)\in\mathcal {A}_j}\left|\int_{k_{-}}^{k^{+}}\Big(u^{\infty}(\hx,\theta,k)+\ov{u^{\infty}(-\hx,-\theta,k)}\Big) e^{-ikz\cdot(\theta-\hx)}dk\right|,
\quad z\in\R^n.
\en

Straightforward calculation shows that we have the following property of the indicators.

\begin{proposition}
For fixed $(\hx,\theta)\in\mathcal {A}_j, \,j=1,2,3$ with $\hx\neq\hth$, we have
\ben
I_l(z_1,\hx,\theta) = I_l(z_2,\hx,\theta) \quad \mbox{for}\, z_1,z_2\in\Pi_{\alpha},\, l=1,2,
\enn
where $\Pi_{\alpha}$ is the hyperplane given by \eqref{Pialpha} for some fixed $\alpha\in\R$.
\end{proposition}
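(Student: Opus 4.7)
The plan is to unwind the definitions \eqref{I1}--\eqref{I2} and observe that the only dependence on $z$ in either indicator appears through the scalar quantity $z\cdot(\theta-\hx)$ inside the exponent $e^{-ikz\cdot(\theta-\hx)}$; everything else (the far field data, the integration in $k$, the absolute value, the sum over pairs) is independent of $z$. Hence if $z_1$ and $z_2$ yield the same value of this scalar, they yield identical integrands and identical integrals.

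The key algebraic fact I would invoke is the definition \eqref{Pialpha} of the hyperplane $\Pi_\alpha$: a point $y$ belongs to $\Pi_\alpha$ iff $y\cdot(\theta-\hx)=-\alpha$. Thus any two points $z_1,z_2\in\Pi_\alpha$ satisfy $z_1\cdot(\theta-\hx)=z_2\cdot(\theta-\hx)=-\alpha$. Substituting this into the phase factor gives $e^{-ikz_1\cdot(\theta-\hx)}=e^{ik\alpha}=e^{-ikz_2\cdot(\theta-\hx)}$ for every wave number $k\in(k_{-},k^{+})$, so the integrands of both $I_1(z,\hx,\theta)$ and $I_2(z,\hx,\theta)$ agree at $z=z_1$ and $z=z_2$ pointwise in $k$.

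From there the argument is a one-line integration: since the integrands coincide on the whole range $(k_-,k^+)$, their integrals coincide, and taking the modulus preserves this equality. This handles both $l=1$ and $l=2$ simultaneously since only the bracketed combination of far field patterns differs between the two, and that combination is manifestly $z$-independent. I would write the proof for $I_1$ explicitly, then remark that the identical substitution works verbatim for $I_2$ with $u^\infty(\hx,\theta,k)$ replaced by $u^\infty(\hx,\theta,k)+\overline{u^\infty(-\hx,-\theta,k)}$.

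There is no real obstacle here: the statement is essentially a restatement of the fact that the hyperplane $\Pi_\alpha$ is precisely the level set of the linear map $z\mapsto z\cdot(\theta-\hx)$ at the value $-\alpha$, and the indicators were designed to factor through this linear map. The hypothesis $\hx\neq\hth$ is only needed to ensure $\theta-\hx\neq 0$, so that $\Pi_\alpha$ is genuinely a hyperplane (and the phase $e^{-ikz\cdot(\theta-\hx)}$ is non-trivial); this should be mentioned for completeness, although the equality of indicator values would hold vacuously even in the degenerate case.
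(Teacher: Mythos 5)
Your proof is correct and coincides with the paper's own justification, which simply labels the result a ``straightforward calculation'': the sole $z$-dependence of $I_l(z,\hx,\theta)$ is through the phase $e^{-ikz\cdot(\theta-\hx)}$, and this is constant on each level set $\Pi_\alpha$ of $z\mapsto z\cdot(\theta-\hx)$. Nothing further is needed.
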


This implies that when we consider the reconstructions using the indicator $I_l(z,\hx,\theta)$ with a single pair of directions, we just need to compute the values of $I_l(z,\hx,\theta)$ for sampling points in a line along the direction $\hth-\hx$.

To further explain why the two indicators works, we consider again the weak scattering (low frequency) approximation and the Kirchhoff (high frequency) approximation.
For frequencies in the resonance region, a rigorous proof is not known since the inverse problem is full nonlinear. However, the numerical experiments in the next section indicate that our direct sampling methods work very well. We focus on the first indicator $I_1(z,\hx,\theta)$ since the other indicator $I_2(z,\hx,\theta)$
has similar behaviors.

We first consider three types of weak scattering approximation: point like scatterers, small scatterers and low contrast inhomogeneities.

For point like scatterers, with the help of \eqref{uinfzm}, letting $k_{-}\rightarrow 0$ and $k^{+}\rightarrow\infty$, we obtain
\be\label{I1behaviorpoints}
I_1(z,\hx,\theta)
&\approx&\left|\sum_{m=1}^{M}\tau_m\int_{\R^{+}} e^{-ik(z-z_m)\cdot(\theta-\hx)}dk\right|\cr
&=&\left|\sum_{m=1}^{M}\tau_m \delta[(z-z_m)\cdot(\theta-\hx)]\right|,
\en
where $\delta$ is the Dirac delta function. This implies that the indicator $I_1$ takes local maximum for sampling points on the hyperplanes
$\Pi_{m}: (z-z_m)\cdot(\theta-\hx)=0, m=1,2,\cdots, M$.

For small scatterers, denote by $D=\cup_{m=1}^{M} D_m$, where $D_m:=z_m + \rho B_m$ with $z_m$ describes the position, and $B_m$ their corresponding shape. It is proved in \cite{Griesmaier} that
\ben
u^{\infty}(\hx,\hth,k)\approx \sum_{m=1}^{M}c_m e^{ikz_m\cdot(\theta-\hx)}, \quad \mbox{as}\,\rho\rightarrow 0,
\enn
where $c_m$ depends on $k, \rho$ and $B_m$, but independent on the location $z_m, m=1,2,\cdots,M$.
Clearly, the coefficients $c_m$ play the role of the scattering strengths $\tau_m$ in \eqref{uinfzm}. Thus, we could expect that the indicator takes the same behavior as the one for the point like scatterers. To summarize, the indicator has the following property.

\begin{proposition}
Let $(\hx,\theta)\in\mathcal {A}_j, \,j=1,2,3,$ be fixed with $\hx\neq\hth$.
For point like scatterers and small scatterers, the indicator $I_1$ takes local maximum for sampling points on the hyperplanes
$\Pi_{m}: (z-z_m)\cdot(\theta-\hx)=0, m=1,2,\cdots, M$.
\end{proposition}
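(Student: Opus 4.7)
The plan is to substitute the far-field approximation directly into the definition of $I_1$ and analyze the resulting one-dimensional $k$-integral term by term. For point-like scatterers this is essentially the computation already sketched in \eqref{I1behaviorpoints}, but carried out carefully on the finite band $(k_-,k^+)$; for small scatterers, the expansion from \cite{Griesmaier} merely replaces the strength $\tau_m$ by a $k$-dependent but bounded coefficient $c_m=c_m(k,\rho,B_m)$, so the same strategy applies.

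First I would insert $u^{\infty}(\hx,\hth,k)\approx\sum_{m=1}^{M}\tau_m e^{ikz_m\cdot(\hth-\hx)}$ into the definition of $I_1$ and interchange the finite sum with the integral. Writing $s_m(z):=(z-z_m)\cdot(\theta-\hx)$ for brevity, each summand becomes $\int_{k_-}^{k^+}\tau_m e^{-iks_m(z)}\,dk$. Next I would evaluate this elementary integral explicitly, obtaining
\[
\int_{k_-}^{k^+}\tau_m e^{-iks_m(z)}\,dk
=\tau_m(k^+-k_-)\,e^{-i(k_-+k^+)s_m(z)/2}\,
\frac{\sin\!\bigl((k^+-k_-)s_m(z)/2\bigr)}{(k^+-k_-)s_m(z)/2}.
\]
The modulus of this expression attains its maximum value $|\tau_m|(k^+-k_-)$ precisely when $s_m(z)=0$, that is, when $z\in\Pi_m$, and away from $\Pi_m$ it decays like $|s_m(z)|^{-1}$ with oscillations on the scale $(k^+-k_-)^{-1}$. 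Letting formally $k_-\to 0$ and $k^+\to\infty$ collapses this sinc-type kernel to the Dirac mass $\delta(s_m(z))$ already displayed in \eqref{I1behaviorpoints}, recovering the representation used in the proof of Theorem \ref{LineUnique}.

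For small scatterers I would repeat the same substitution but retain the $k$-dependent coefficient $c_m(k)$. Since $c_m(k)$ is smooth and bounded on $(k_-,k^+)$ while the oscillatory factor $e^{-iks_m(z)}$ has non-vanishing phase derivative $-is_m(z)$ whenever $z\notin\Pi_m$, one application of integration by parts gives $|\int_{k_-}^{k^+}c_m(k)e^{-iks_m(z)}\,dk|=O(|s_m(z)|^{-1})$, while on $\Pi_m$ the integrand is non-oscillatory and accumulates constructively to $\int_{k_-}^{k^+}c_m(k)\,dk$. This produces the same qualitative peak-on-hyperplane behavior as in the point-like case. The main obstacle is handling the interference between the $M$ sinc-like terms: the decay $O(|s_m|^{-1})$ of the off-plane side lobes is only polynomial, so to conclude that the sum $I_1(z,\hx,\theta)$ still displays a genuine local maximum on each individual $\Pi_m$ one needs the hyperplanes to be pairwise separated on a scale larger than the inverse bandwidth $(k^+-k_-)^{-1}$. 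Under this (implicit) well-separation condition the triangle inequality isolates the $m$-th term near $\Pi_m$ and completes the proof; in the delta-function limit this separation requirement becomes vacuous and the statement becomes exact.
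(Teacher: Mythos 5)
Your proposal is correct and follows the same basic strategy as the paper -- insert the linearized far-field expression into $I_1$ and analyze the resulting one-dimensional $k$-integral -- but it is carried out more carefully than the paper's own argument. The paper simply lets $k_{-}\rightarrow 0$ and $k^{+}\rightarrow\infty$ so that each term collapses to a Dirac mass $\tau_m\delta[(z-z_m)\cdot(\theta-\hx)]$, and for small scatterers it only remarks that the coefficients $c_m$ ``play the role of'' $\tau_m$; it never evaluates the integral on the actual finite band where the data live. Your explicit sinc-kernel computation does exactly that and shows the modulus of each term peaks precisely on $\Pi_m$ with envelope decay $O(|s_m(z)|^{-1})$, and your integration-by-parts (non-stationary phase) bound legitimately handles the $k$-dependence of $c_m(k)$ that the paper glosses over. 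Your observation that the slowly decaying side lobes force an implicit well-separation hypothesis -- the hyperplanes $\Pi_m$ must be separated on a scale exceeding $(k^{+}-k_{-})^{-1}$ before the triangle inequality isolates the $m$-th peak -- is a genuine refinement absent from the paper, and it explains why Example~5 there uses a much wider frequency band $[20,100]$ for point scatterers. One small caveat you could add: for the small-scatterer case the constructive accumulation on $\Pi_m$ gives $\bigl|\int_{k_{-}}^{k^{+}}c_m(k)\,dk\bigr|$, which is a genuine maximum only if the phase of $c_m(k)$ does not vary enough over the band to cause cancellation; this holds for the usual asymptotic forms of $c_m$ but is tacitly assumed, both by you and by the paper.
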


Inserting \eqref{Born} into $I_1(z,\hx, \theta)$ and interchanging the order of integration, we have
\be\label{I1behavior}
I_1(z,\hx,\theta)
&=&\left|\int_{k_{-}}^{k^{+}}k^2\int_{\R^n}e^{ik(\hth-\hx)\cdot y}[q(y)-1]dy e^{-ikz\cdot(\theta-\hx)}dk\right|\cr
&=&\left|\int_{\R^n}[q(y)-1]\int_{k_{-}}^{k^{+}}k^2 e^{-ik(z-y)\cdot(\theta-\hx)}dk dy\right|\cr
&=&\left|\int_{\R^n}[q(y)-1]\frac{f(y,z,\hx,\theta)}{(z-y)\cdot(\theta-\hx)}dy\right|,\quad z\in\R^n, \theta\in\Theta_l
\en
with
\ben
f(y,z,\hx,\theta):=\Big[k^2-\frac{2k}{i(z-y)\cdot(\theta-\hx)}-\frac{2}{|(z-y)\cdot(\theta-\hx)|^2}\Big] e^{-ik(z-y)\cdot(\theta-\hx)}\Big|_{k_{-}}^{k^{+}}.
\enn

Note that $q-1$ is compactly supported, therefore 
\ben
|f(y,z,\hx,\theta)|= O(1)\quad \mbox{as}\quad dist(z, S_D(\phi))\rightarrow\infty,
\enn
where $dist(z, S_D(\phi))$ is the distance between the sampling point $z$ and the strip $S_D(\phi)$.
Consequently we deduce from \eqref{I1behavior} the following asymptotic behavior.

\begin{proposition}\label{Proposition3}
Let $(\hx,\theta)\in\mathcal {A}_j, \,j=1,2,3,$ be fixed with $\hx\neq\hth$. Then for scattering by inhomogeneous medium,
\ben
I_1(z,\hx,\theta) = O\left(\frac{1}{dist(z, S_D(\phi))}\right) \quad \mbox{as}\quad dist(z, S_D(\phi))\rightarrow\infty.
\enn
\end{proposition}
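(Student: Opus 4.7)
The statement is essentially a quantitative version of the remark that precedes it, so the strategy is to make rigorous the bound $|f(y,z,\hx,\theta)|=O(1)$ and then integrate. I would start directly from the representation \eqref{I1behavior} already derived for the indicator, namely
\[
I_1(z,\hx,\theta)=\left|\int_{\R^n}[q(y)-1]\,\frac{f(y,z,\hx,\theta)}{(z-y)\cdot(\theta-\hx)}\,dy\right|,
\]
and use the assumption that $q-1$ has compact support $\ov{D}$ to restrict the integration to $\ov{D}$. Let $\alpha(y,z):=(z-y)\cdot(\theta-\hx)=|\theta-\hx|\,(z-y)\cdot\phi$.

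\textbf{Geometric lower bound on $|\alpha|$.} The first key step is to bound $|\alpha(y,z)|$ from below uniformly in $y\in\ov{D}$ by the quantity $\mathrm{dist}(z,S_D(\phi))$. By the definition of the $\phi$-strip hull, every $y\in\ov{D}$ satisfies $\inf_{w\in D} w\cdot\phi\leq y\cdot\phi\leq \sup_{w\in D} w\cdot\phi$; hence, when $z$ lies outside $S_D(\phi)$, the signed projection $(z-y)\cdot\phi$ has a fixed sign and $|(z-y)\cdot\phi|\geq \mathrm{dist}(z,S_D(\phi))$ for every $y\in\ov{D}$. Multiplying by $|\theta-\hx|>0$ (which is a positive constant since $\hx\neq\hth$ is assumed) gives
\[
|\alpha(y,z)|\;\geq\; |\theta-\hx|\,\mathrm{dist}(z,S_D(\phi))\quad\text{for all }y\in\ov{D}.
\]

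\textbf{Uniform bound on $f$.} The second step is to show that, for $\mathrm{dist}(z,S_D(\phi))$ large enough, there is a constant $C$, depending only on $k_-$, $k^+$ and $|\theta-\hx|$, such that $|f(y,z,\hx,\theta)|\leq C$ uniformly in $y\in\ov{D}$. Since $|e^{-ik\alpha}|=1$ for real $k$, each of the three terms in $f$ is controlled: the $k^2$ term is bounded by $2(k^+)^2$, while the $2k/(i\alpha)$ and $2/|\alpha|^2$ terms are bounded by $2k^+/|\alpha|$ and $2/|\alpha|^2$ respectively, both of which tend to zero by the lower bound on $|\alpha|$ established above. This yields the uniform estimate $|f|\leq 2(k^+)^2+o(1)$ as $\mathrm{dist}(z,S_D(\phi))\to\infty$.

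\textbf{Combining the bounds.} Inserting the two estimates into the representation gives
\[
I_1(z,\hx,\theta)\;\leq\; \frac{C}{|\theta-\hx|\,\mathrm{dist}(z,S_D(\phi))}\int_{\ov{D}}|q(y)-1|\,dy,
\]
which is precisely the claimed $O(1/\mathrm{dist}(z,S_D(\phi)))$ asymptotic. No step is really a substantial obstacle; the only point deserving some care is the verification of the uniform boundedness of $f$, since naively the terms $1/\alpha$ and $1/|\alpha|^2$ look singular—the geometric lower bound from Step 1 is what prevents any blow-up, and ultimately these auxiliary terms even tend to zero, leaving only the harmless $k^2$ contribution to set the constant $C$.
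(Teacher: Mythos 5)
Your proposal is correct and follows essentially the same route as the paper, which only sketches the argument: the paper derives the representation \eqref{I1behavior} by explicit integration in $k$, asserts $|f|=O(1)$ using the compact support of $q-1$, and reads off the decay from the factor $1/[(z-y)\cdot(\theta-\hx)]$. Your two added details — the uniform lower bound $|(z-y)\cdot(\theta-\hx)|\geq |\theta-\hx|\,\mathrm{dist}(z,S_D(\phi))$ for $y\in\ov{D}$ and the resulting uniform bound on $f$ — are exactly the steps the paper leaves implicit, and they are carried out correctly.
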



We now consider the scattering by extended obstacles based on Kirchhoff approximation. Given a pair of directions $(\hx,\hth)\in \mathcal {A}_j, \,j=1,2,3,$ with $\hx\neq \hth$, we define the reflection coefficient
\ben
R^{\g}(\hx,\hth):=\g\kappa(y^{+})^{-1/2}|\hth-\hx|^{(1-n)/2}\phi\cdot\hx,
\enn
where $\phi$ is given by \eqref{phi}, $y^{+}\in \pa D$ is the preimage of $\phi$ under the Gauss map, $\kappa(y^{+})$ is the Gauss curvature at $y^{+}$, $\g=-1$ for the Dirichlet problem and $\g=1$ for the Neumann problem.
For a strictly convex obstacle $D$, it is proved that \cite{Majda}
\ben
u^{\infty}(\hx,\hth, k) =  R^{\g}(\hx,\hth) e^{ik y^{+}\cdot(\hth-\hx)} + O(k^{-1}),\quad k\geq k_{-}.
\enn
Inserting this into $I_1$, we obtain
\ben
I_1(z, \hx,\hth)
&\approx& |R^{\g}(\hx,\hth)|\int_{k_{-}}^{k^{+}} e^{ik (y^{+}-z)\cdot(\hth-\hx)} dk\\
&=& \frac{|R^{\g}(\hx,\hth)|}{|(y^{+}-z)\cdot(\hth-\hx)|} e^{ik (y^{+}-z)\cdot(\hth-\hx)}\Big|_{k_{-}}^{k^{+}}.
\enn

From this, we expect the following behavior of the indicator $I_1$ for extended convex obstacles.

\begin{proposition}\label{Proposition4}
Let $(\hx,\theta)\in\mathcal {A}_j, \,j=1,2,3,$ be fixed with $\hx\neq\hth$. Then for scattering by extended convex obstacles,
the indicator $I_1$ decays like
\ben
\frac{1}{|(y^{+}-z)\cdot(\hth-\hx)|}
\enn
when the sampling point $z$ moves away from the hyperplane $\Pi: (z-y^{+})\cdot(\hth-\hx)=0$.
\end{proposition}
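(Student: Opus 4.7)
The plan is to exploit the high-frequency Kirchhoff-type asymptotic for a strictly convex obstacle that was stated just before the proposition,
$$u^{\infty}(\hx,\hth,k) = R^{\g}(\hx,\hth)\, e^{iky^{+}\cdot(\hth-\hx)} + O(k^{-1}), \quad k\ge k_{-},$$
and to insert it directly into the definition \eqref{I1} of $I_1(z,\hx,\hth)$. This reduces the proposition to computing (and estimating) two oscillatory integrals over the bounded band $(k_{-},k^{+})$, both with phase function $k\mapsto k(y^{+}-z)\cdot(\hth-\hx)$.

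First, I would isolate the leading contribution. After substitution and pulling the $z$-independent amplitude $R^{\g}(\hx,\hth)$ outside, the leading piece is exactly
$$R^{\g}(\hx,\hth)\int_{k_{-}}^{k^{+}} e^{ik(y^{+}-z)\cdot(\hth-\hx)}\, dk,$$
which evaluates in closed form. Its modulus equals
$$\frac{|R^{\g}(\hx,\hth)|}{|(y^{+}-z)\cdot(\hth-\hx)|}\,\bigl|e^{ik^{+}(y^{+}-z)\cdot(\hth-\hx)} - e^{ik_{-}(y^{+}-z)\cdot(\hth-\hx)}\bigr|,$$
and is therefore bounded by $2|R^{\g}(\hx,\hth)|/|(y^{+}-z)\cdot(\hth-\hx)|$. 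This already exhibits exactly the decay claimed in the statement, uniformly in $z$ provided $(y^{+}-z)\cdot(\hth-\hx)\neq 0$.

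The remaining task is to verify that the $O(k^{-1})$ remainder from the Majda expansion contributes only lower-order terms in the same asymptotic regime. I would multiply the remainder by $e^{-ikz\cdot(\hth-\hx)}$ and apply one integration by parts in $k$, using the oscillatory factor $e^{ik(y^{+}-z)\cdot(\hth-\hx)}$ to produce the prefactor $1/|(y^{+}-z)\cdot(\hth-\hx)|$; the boundary terms are $O(k_{-}^{-1})$ and the remaining integrand is $O(k^{-2})$, which is integrable over the bounded band. Thus the remainder is also $O\bigl(1/|(y^{+}-z)\cdot(\hth-\hx)|\bigr)$, so the leading behavior dictates
$$I_1(z,\hx,\hth) = O\!\left(\frac{1}{|(y^{+}-z)\cdot(\hth-\hx)|}\right)\quad\text{as}\quad |(y^{+}-z)\cdot(\hth-\hx)|\to\infty.$$

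The main obstacle is the book-keeping of the remainder term: the statement is purely an $O$-estimate, and if one only has the pointwise asymptotic $O(k^{-1})$ without smoothness in $k$, a naive triangle-inequality bound gives a constant contribution rather than decay. The remedy, as above, is to exploit the oscillation by integration by parts, which is legitimate because the Majda expansion is in fact a classical symbol expansion in $k^{-1}$ whose next term inherits the same stationary phase $y^{+}\cdot(\hth-\hx)$. Once this is noted, the two estimates combine in the triangle inequality to give the stated decay and complete the argument.
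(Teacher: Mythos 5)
Your proposal follows essentially the same route as the paper: substitute Majda's high-frequency asymptotics $u^{\infty}(\hx,\hth,k)=R^{\g}(\hx,\hth)e^{iky^{+}\cdot(\hth-\hx)}+O(k^{-1})$ into $I_1$ and evaluate the leading oscillatory integral in closed form to extract the decay rate $1/|(y^{+}-z)\cdot(\hth-\hx)|$. The only difference is that you additionally control the $O(k^{-1})$ remainder by integration by parts in $k$, a step the paper leaves implicit behind an ``$\approx$''; this is a welcome tightening but not a different argument.
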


Finally, we want to remark that the weak scattering approximation and the Kirchhoff approximation are only needed for justification of the proposed direct sampling method theoretically. However, numerical examples in the next section show that the direct sampling methods work very well, even if the obstacle has concave part or contains multiple multiscalar components.

\section{Numerical examples and discussions}
\label{NumExamples}
\setcounter{equation}{0}
In this section, a variety of numerical examples are presented in two dimensions to illustrate the applicability
and effectiveness of our sampling methods.
The boundaries of the scatterers used in our numerical experiments are parameterized as follows
\be
\label{kite}&\mbox{\rm Kite:}&\quad x(t)\ =(a,b)+\ (\cos t+0.65\cos 2t-0.65, 1.5\sin t),\quad 0\leq t\leq2\pi,\\
\label{circle}&\mbox{\rm Circle:}&\quad x(t)\ =(a,b)+r\, (\cos t, \sin t),\quad 0\leq t\leq2\pi,\quad
\en
with $(a,b)$ be the location of the scatterer which may be different in different examples
and $r$ be the radius of the circle.

In our simulations, if not stated otherwise, we will always consider $20$ equally distributed wave numbers in the frequency band $[10,20]$ (so the wavelength is in $[0.314, 0.628]$). The far field patterns are obtained by using the boundary integral equation method.
We further perturb these synthetic data with $10\%$ relative random noise.
With these perturbed data, we solve the inverse problems using indicators proposed in the previous section with $0.1$ as the sampling space. We choose \ben Q=
\left [
            \begin{array}{ll}
              0 & 1  \\
              1 & 0
            \end{array}
          \right]
\enn
in $\mathcal A_2$, i.e.,  $\hat x$ is obtained by rotating $\theta$ anti-clockwise with $\pi/2$.

\textbf{Example-1:} We start with the well known bench example with a sound soft kite. In this example, we compare the behaviors of our indicators using one incident direction $\theta=[1,0]$. Fig. \ref{Ex1} gives the reconstructions of kite shaped domain using $I_1^{(1)}$ with $\hx=[-1,0]$ and $I_1^{(2)}$ with $\hx= [0,1]$, respectively.  We obtain a highlighted line in both figures, the reconstructions are consistent with the results discussed in the previous sections.  \\
\begin{figure}[htbp]
  \centering
  \subfigure[\textbf{$I_1^{(1)}$.}]{
    \includegraphics[height=2in,width=2in]{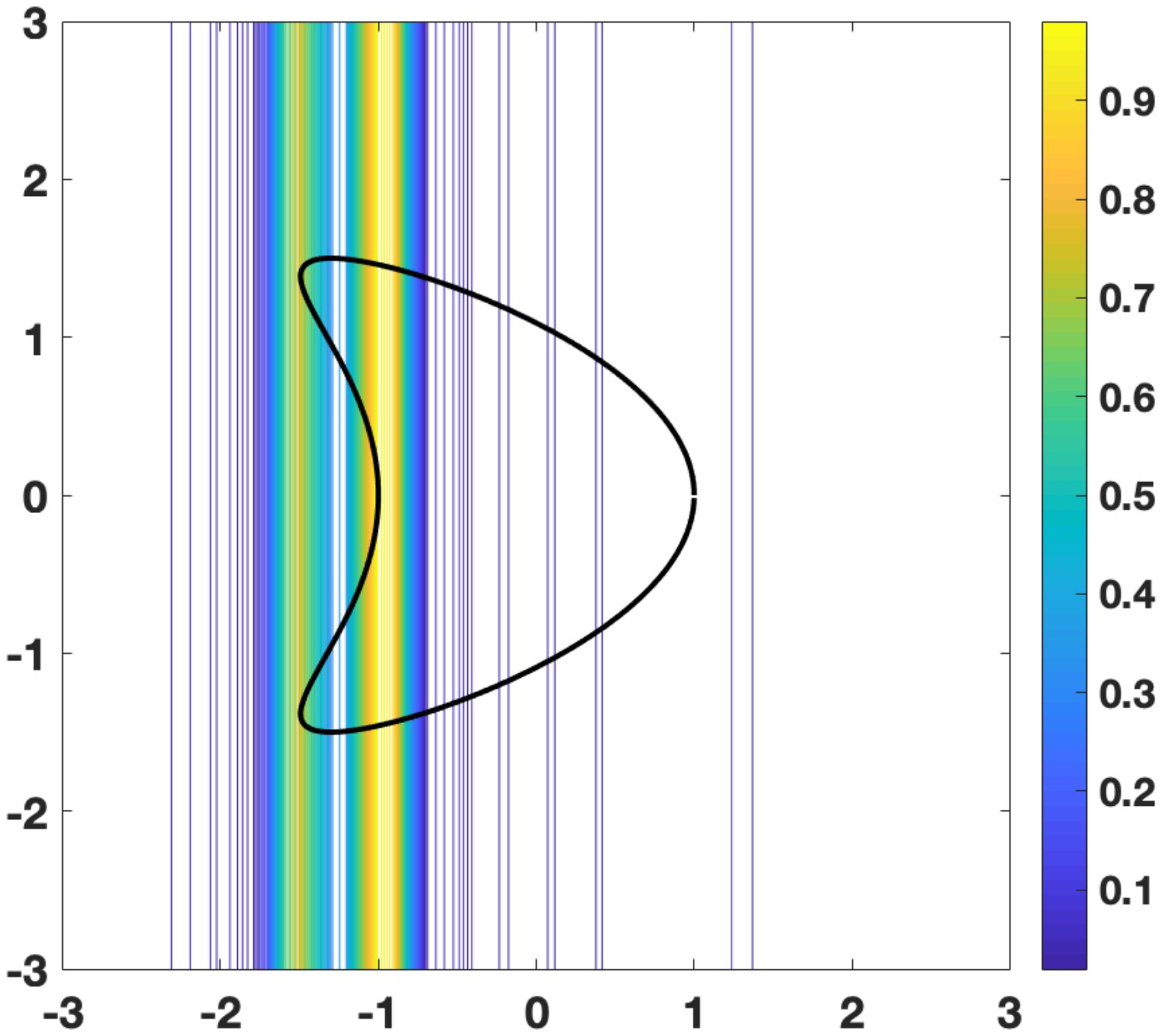}}
  \subfigure[\textbf{$I_1^{(2)}$.}]{
    \includegraphics[height=2in,width=2in]{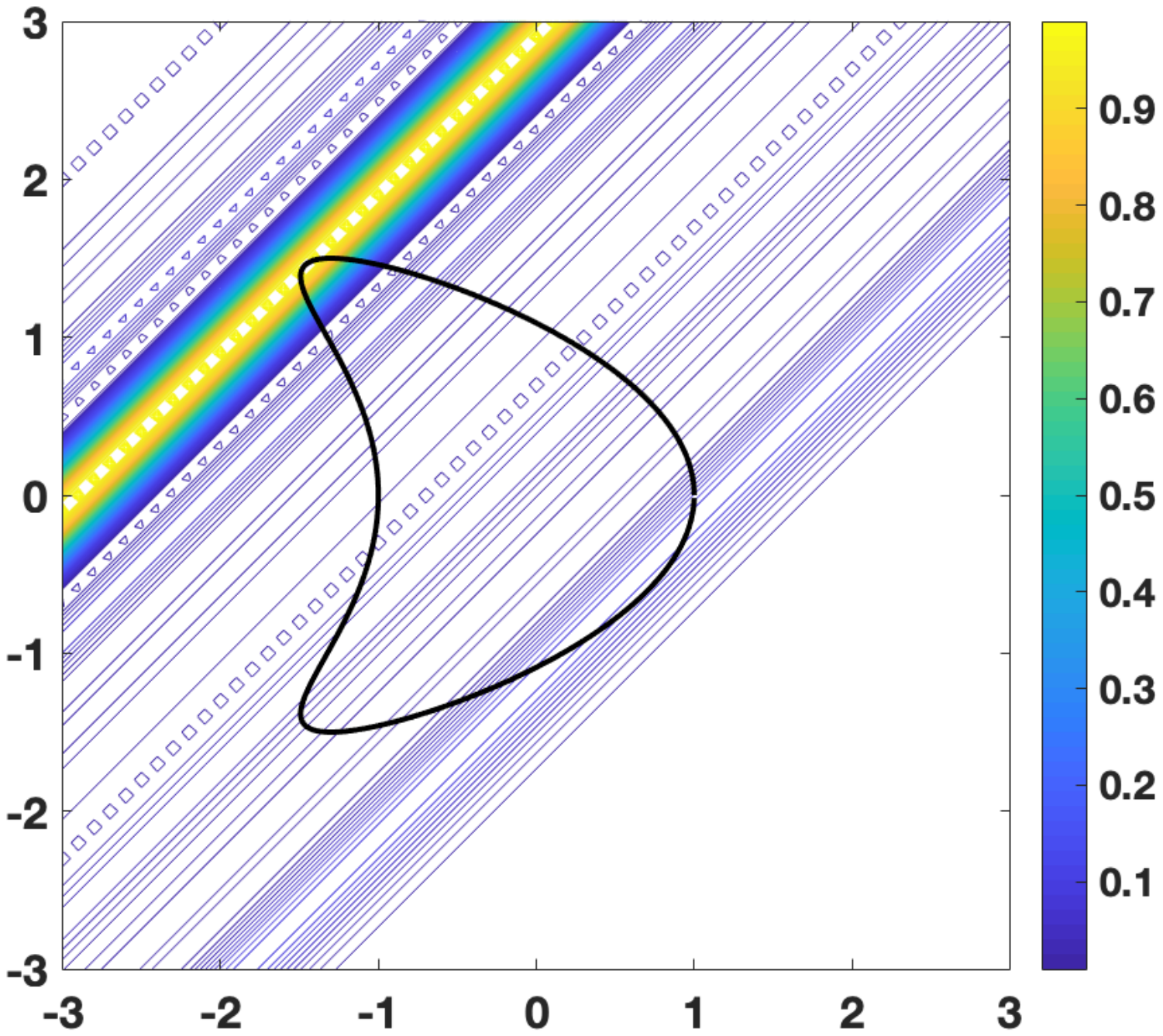}}
\caption{{\bf Example-1.}\, Reconstructions by $I_1^{(1)}$ and $I_1^{(2)}$ with a single pair of observation and incident directions.}\label{Ex1}
\end{figure}

\textbf{Example-2:}  The example has the same setup with \textbf{Example-1}, while we use two incident directions $\theta=[1,0],[-1,0]$. Fig. \ref{Ex2} gives the reconstructions of kite shaped domain with two incident directions using $I_1^{(1)}$ and $I_1^{(2)}$, respectively.  The smallest strip containing the kite shaped domain is well captured. 
\begin{figure}[htbp]
  \centering
  \subfigure[\textbf{$I_1^{(1)}$.}]{
    \includegraphics[height=2in,width=2in]{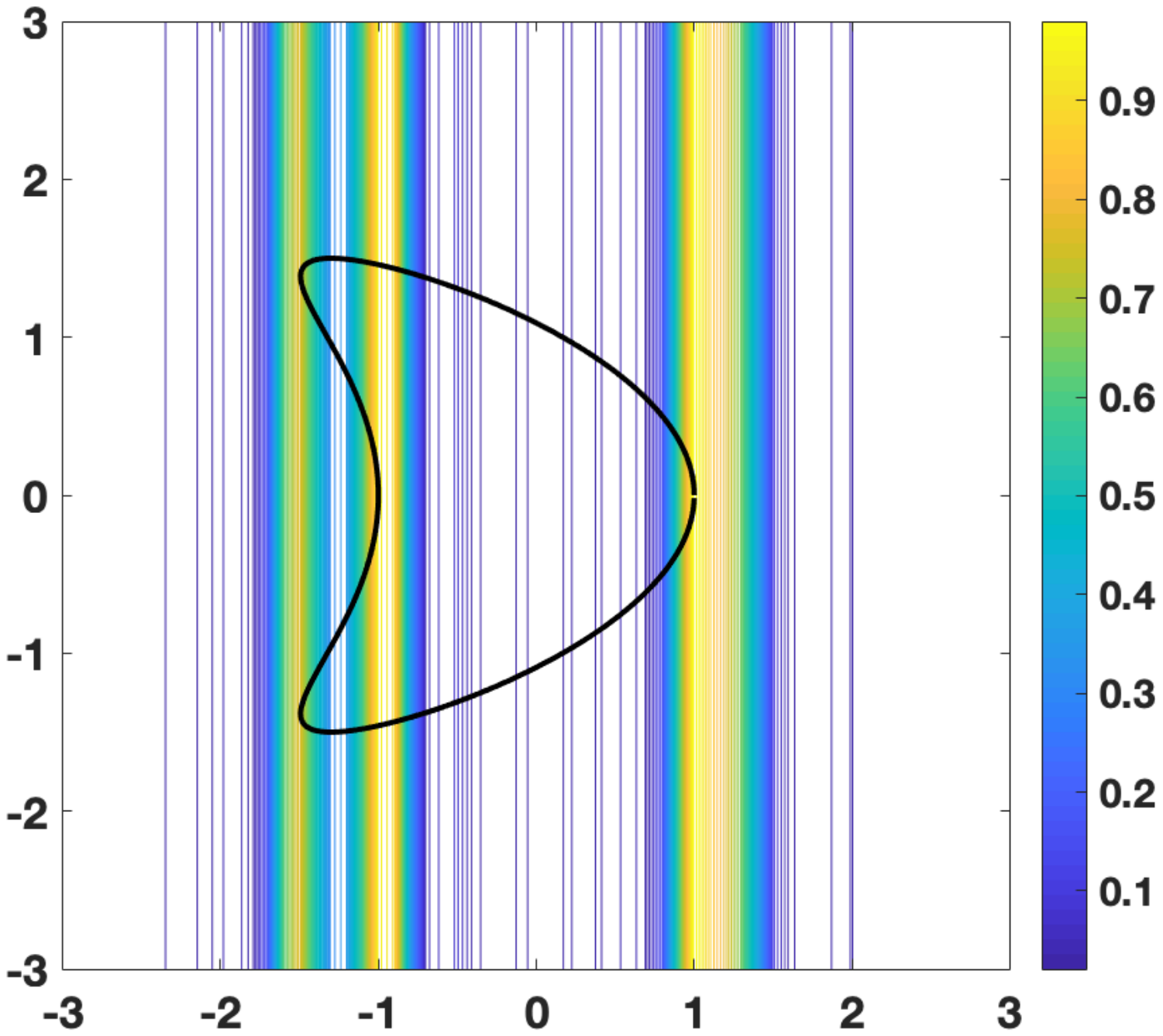}}
  \subfigure[\textbf{$I_1^{(2)}$.}]{
    \includegraphics[height=2in,width=2in]{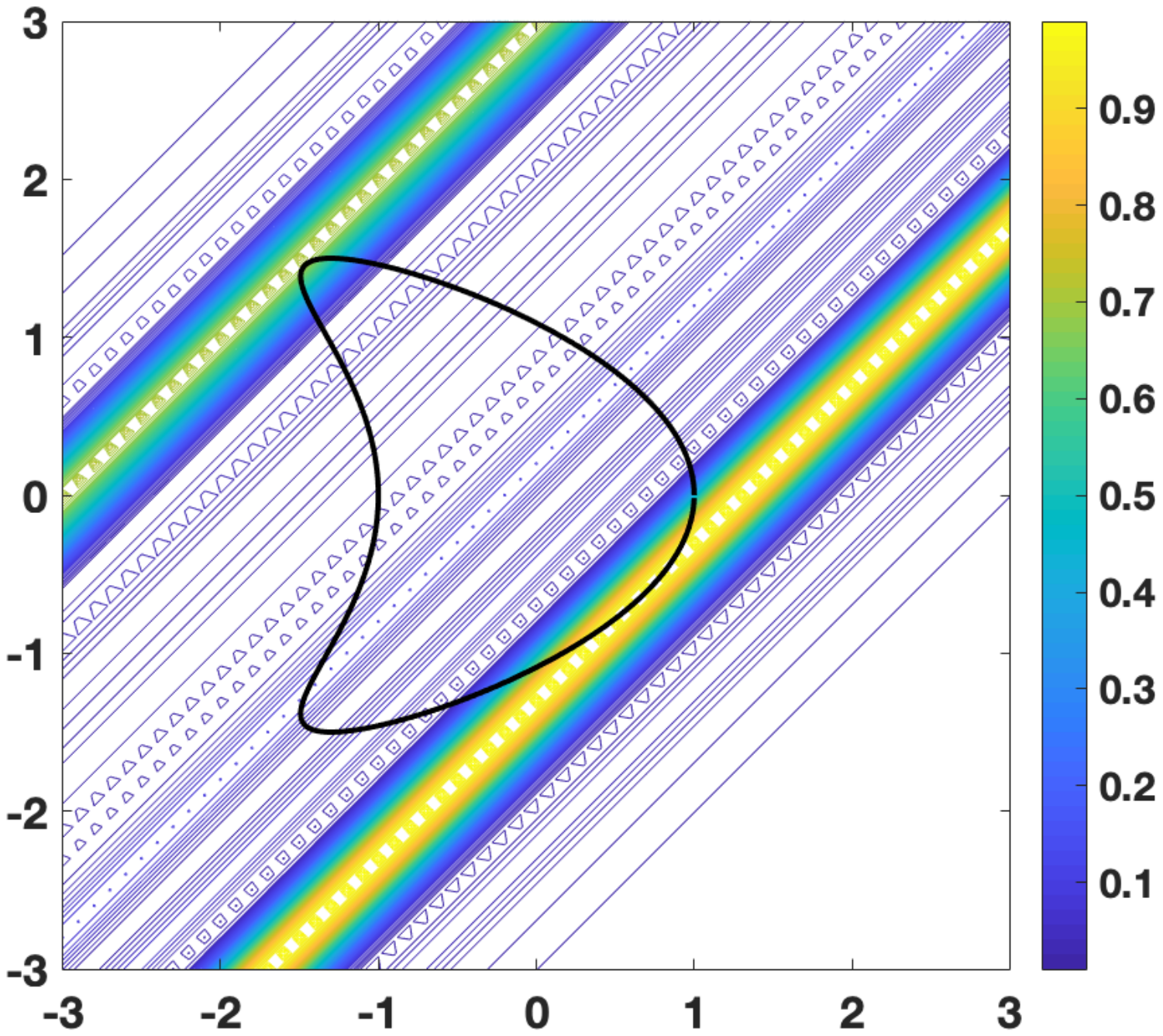}}
\caption{{\bf Example-2.}\, Reconstructions by $I_1^{(1)}$ and $I_1^{(2)}$ with two pairs of observation and incident directions.}\label{Ex2}
\end{figure}

 \textbf{Example-3:} In this example, we still consider the bench example with a sound soft kite.  We increase the number of incident directions to 32. Fig. \ref{Ex3} gives the reconstructions of kite shaped domain using $I_i^{(j)}, i,j=1,2$.  The shape of the kite is well constructed in all the figures. Surprisingly, the concave part is also clearly reconstructed. This example shows that our direct sampling method works well for non-convex scatterers.
\begin{figure}[htbp]
  \centering
  \subfigure[\textbf{$I_1^{(1)}$.}]{
    \includegraphics[height=2in,width=2in]{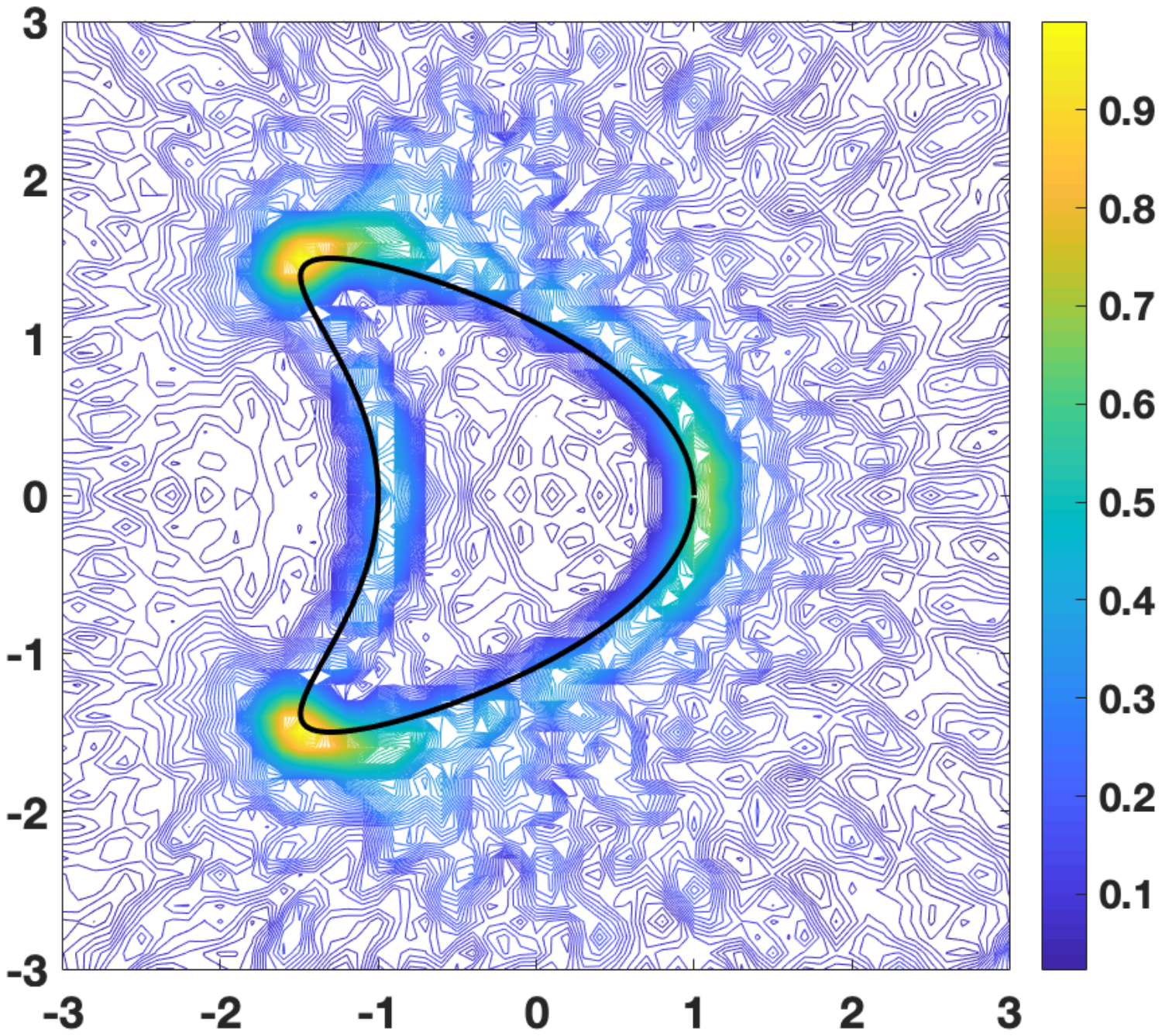}}
  \subfigure[\textbf{$I_1^{(2)}$.}]{
    \includegraphics[height=2in,width=2in]{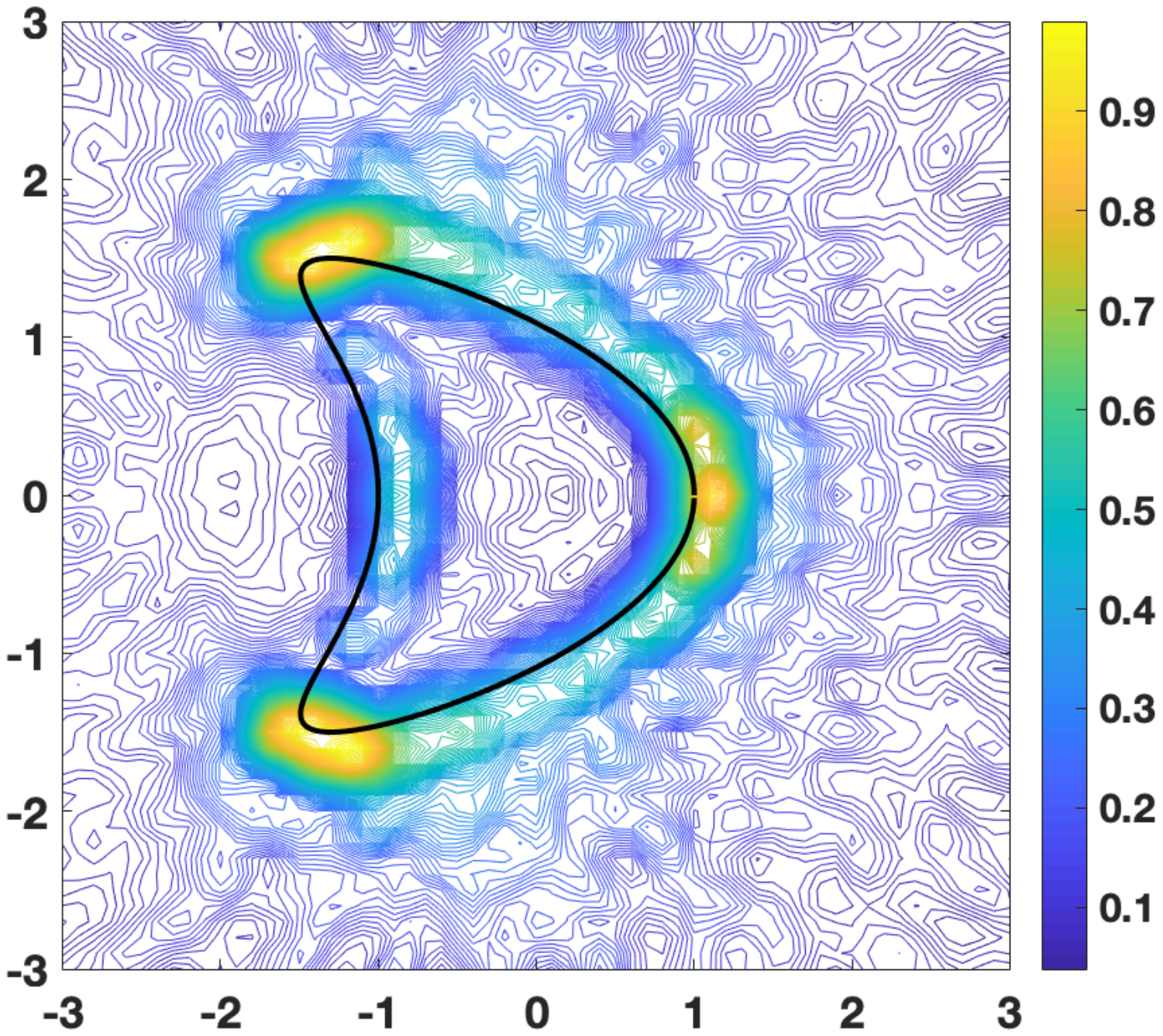}}\\
     \subfigure[\textbf{$I_2^{(1)}$.}]{
    \includegraphics[height=2in,width=2in]{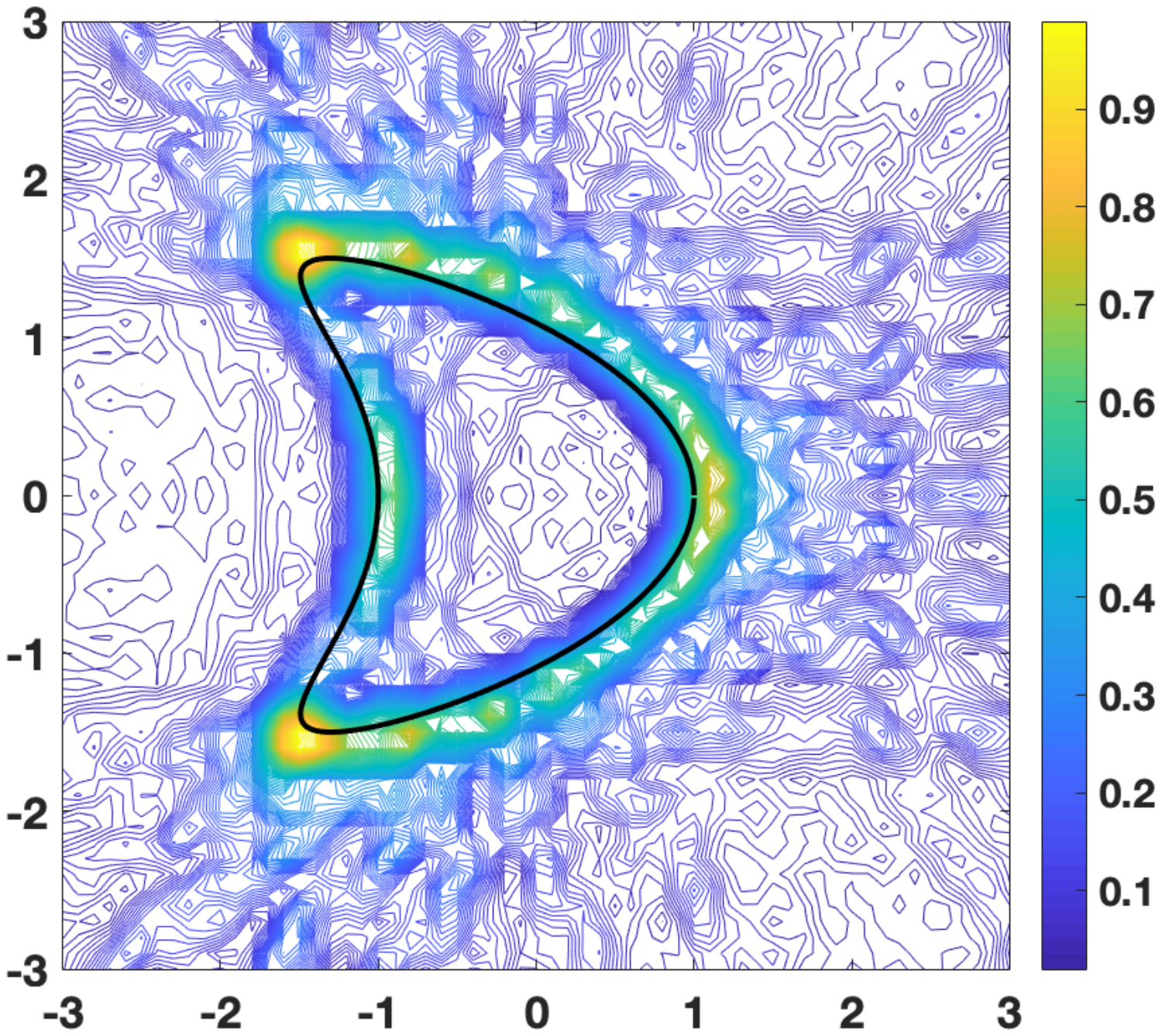}}
     \subfigure[\textbf{$I_2^{(2)}$.}]{
    \includegraphics[height=2in,width=2in]{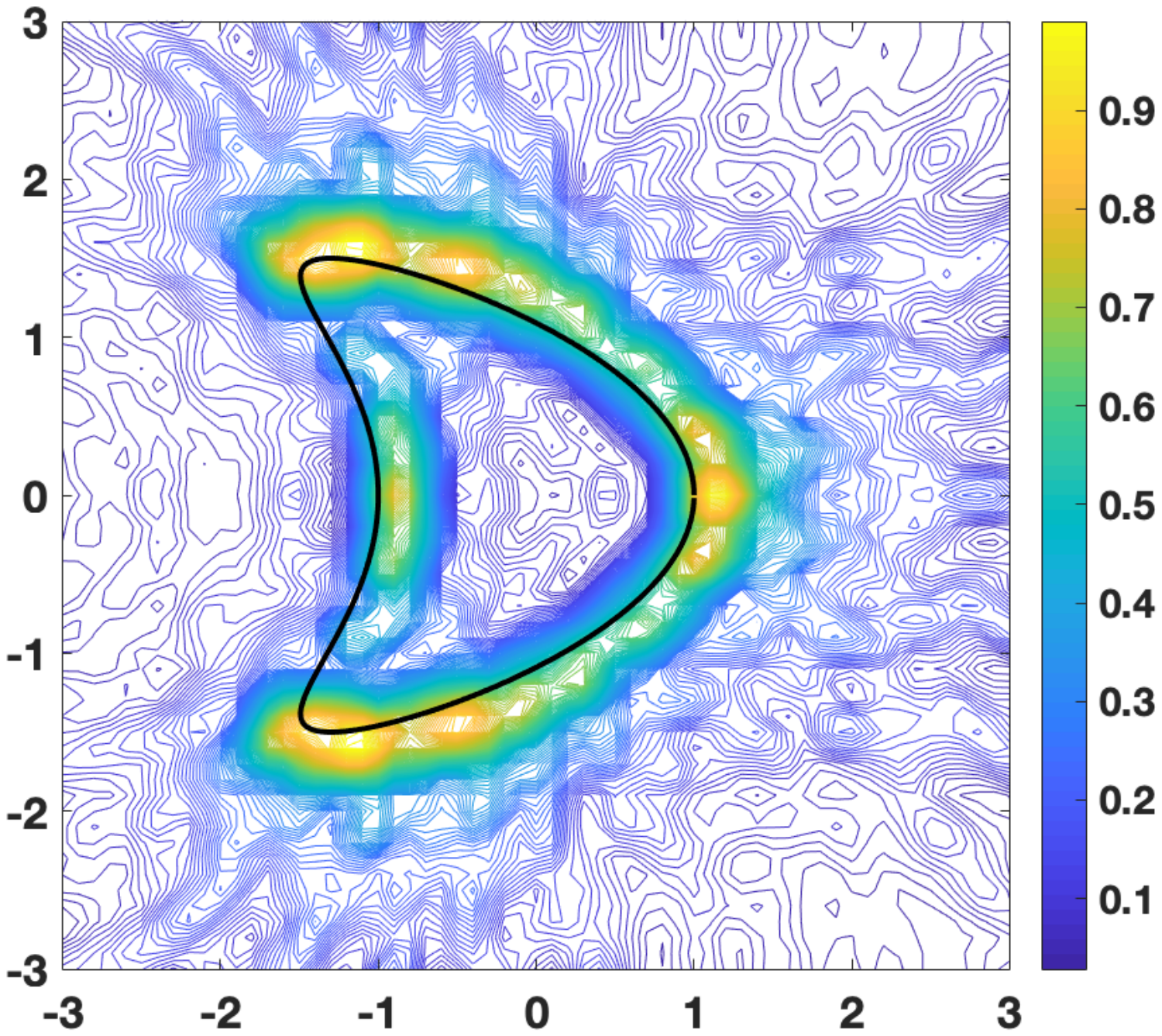}}
\caption{{\bf Example-3.}\, Reconstructions by by $I_1^{(j)}$ and $I_2^{(j)}$ with different data sets $\mathcal {A}_j, j=1,2$. $32$ pairs of directions are used.}\label{Ex3}
\end{figure}

We also consider the indicator $I^{(3)}_1$ with fixed incident direction $\theta$ and $32$ observation directions.  Fig. \ref{Ex32} gives the results. It's obvious that the constructions are good in the illuminated part.
\begin{figure}[htbp]
  \centering
  \subfigure[\textbf{$\theta=[1,0]$.}]{
    \includegraphics[height=2in,width=2in]{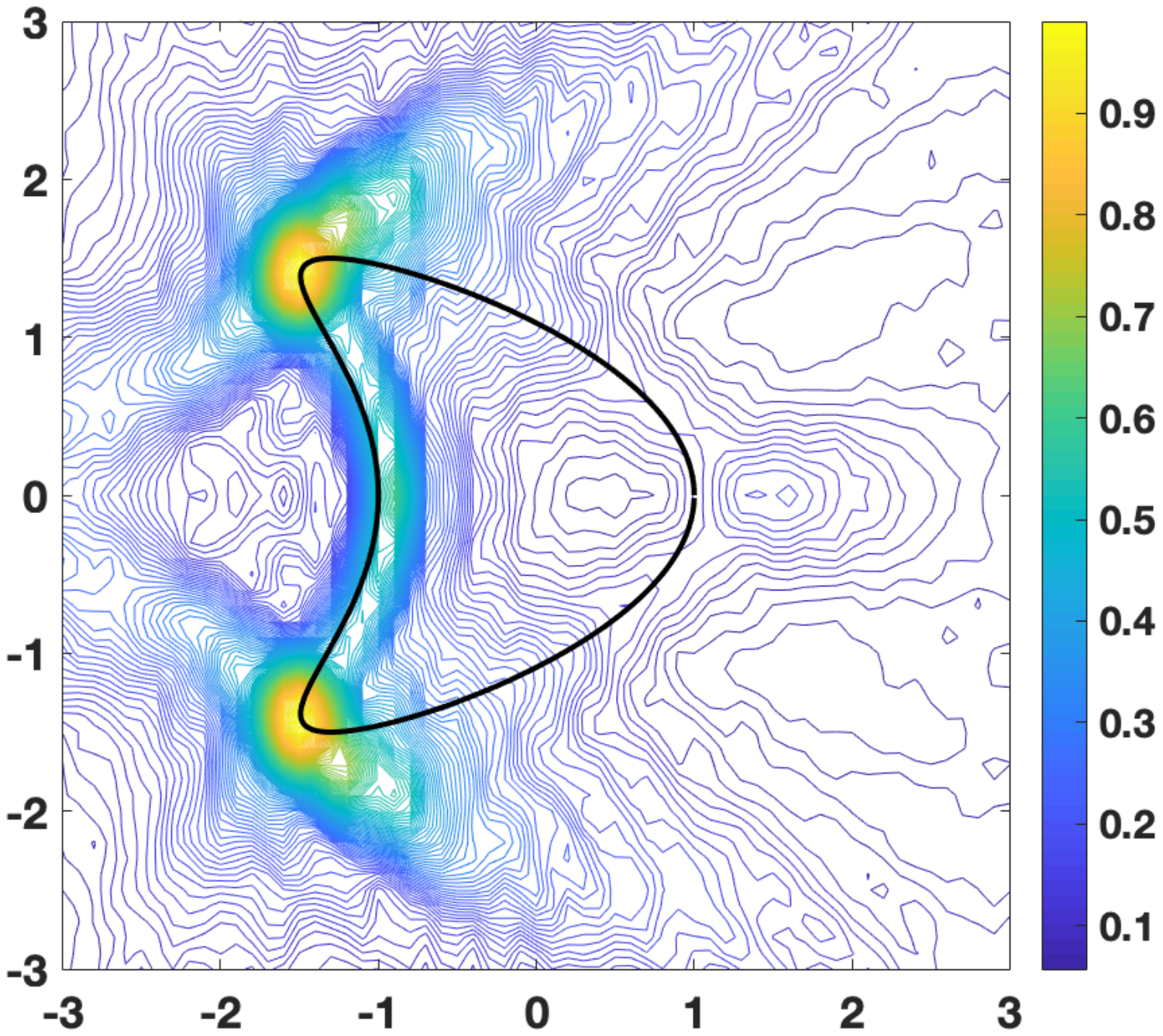}}
  \subfigure[\textbf{$\theta=[0,-1]$.}]{
    \includegraphics[height=2in,width=2in]{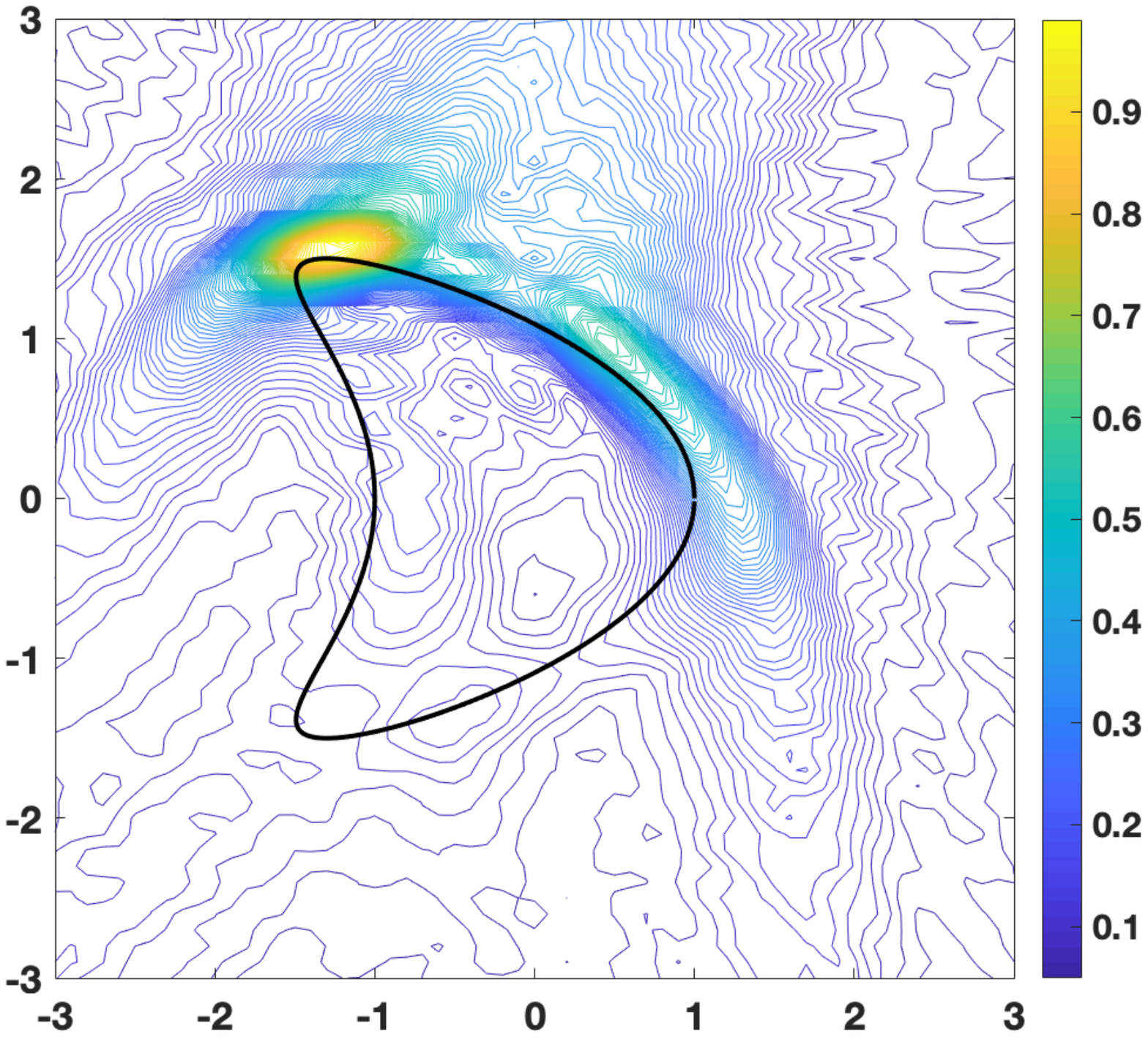}}\\
     \subfigure[\textbf{$\theta=[-1,0]$.}]{
    \includegraphics[height=2in,width=2in]{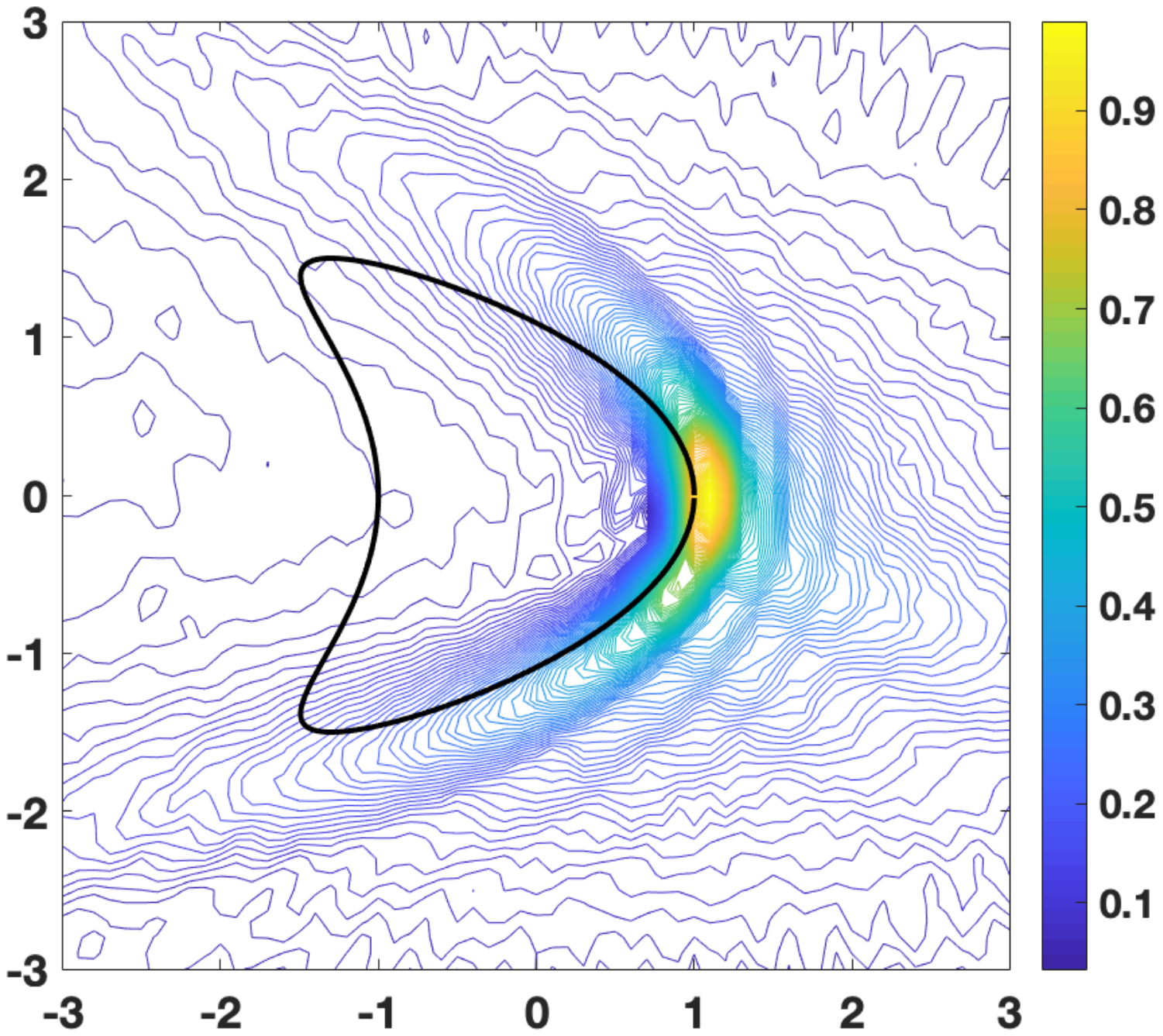}}
     \subfigure[\textbf{$\theta=[0,1]$.}]{
    \includegraphics[height=2in,width=2in]{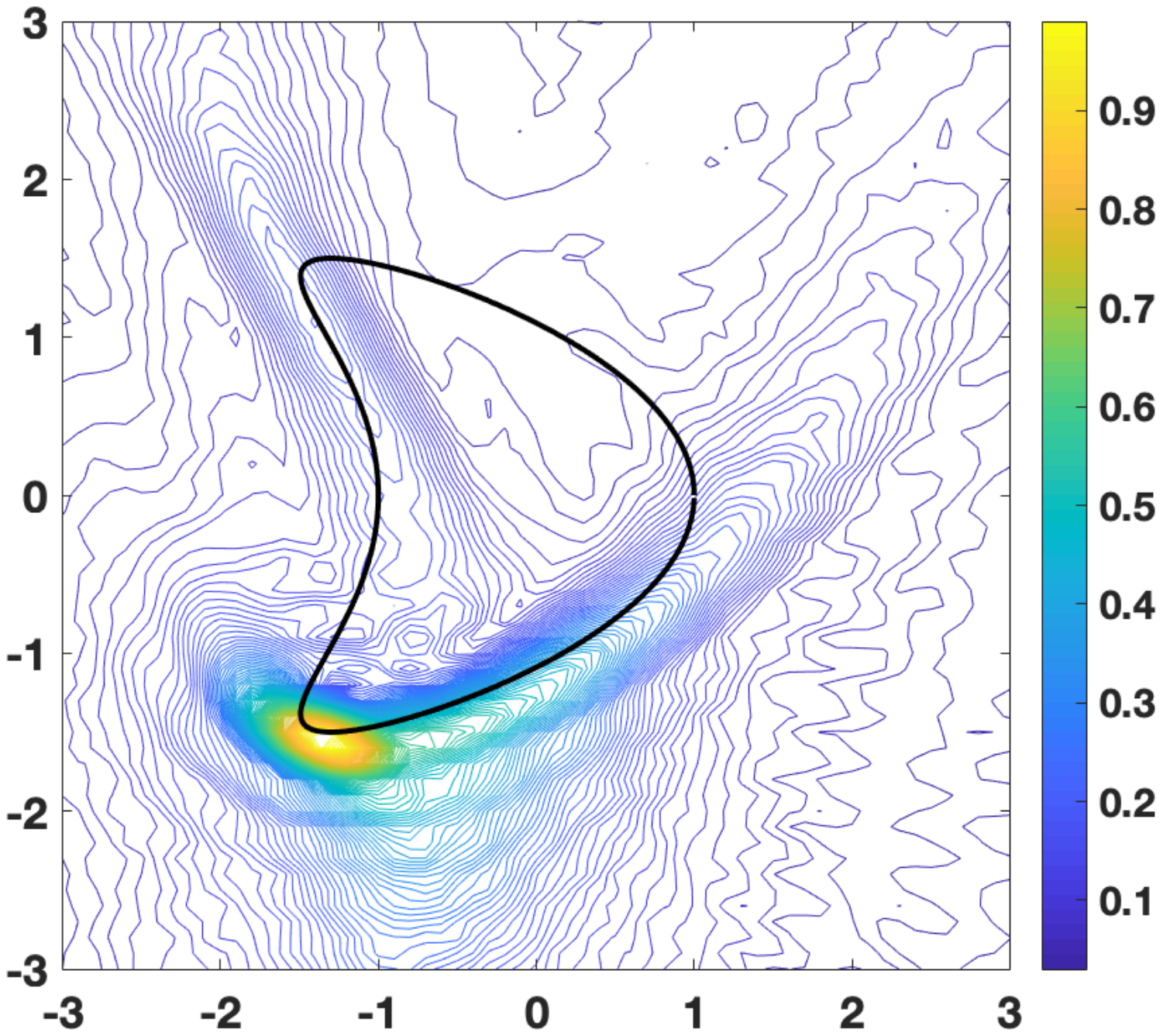}}
\caption{{\bf Example-3.}\, Reconstructions by $I_1^{(3)}$ with fixed incident direction $\theta$ and $32$ observation directions.}\label{Ex32}
\end{figure}

 To improve the result, we use four incident directions, and take the observation directions as follows.
 \be\label{hxhth}
 \hat x\in \left\{
            \begin{array}{ll}
              \Theta_{32} \cap \{(\cos\alpha, \sin\alpha)|\, \alpha\in [3/4\pi, 5/4\pi]\}, &\mbox{if}\, \theta=[1,0],  \\
              \Theta_{32} \cap \{(\cos\alpha, \sin\alpha)|\, \alpha\in [5/4\pi, 7/4\pi]\}, &\mbox{if}\,  \theta=[0,1], \\
              \Theta_{32} \cap \{(\cos\alpha, \sin\alpha)|\, \alpha\in [7/4\pi, 9/4\pi]\}, &\mbox{if}\,  \theta=[-1,0],\\
              \Theta_{32} \cap \{(\cos\alpha, \sin\alpha)|\, \alpha\in [1/4\pi, 3/4\pi]\}, &\mbox{if}\,  \theta=[0,-1].
            \end{array}
          \right.
\en
Fig. \ref{Ex33} gives the result. Clearly, the shape of kite is well reconstructed. The reconstruction is comparable to the results shown in Figure \ref{Ex3} with different data sets.\\

 \begin{figure}[htbp]
   \centering
 \includegraphics[height=2in,width=2in]{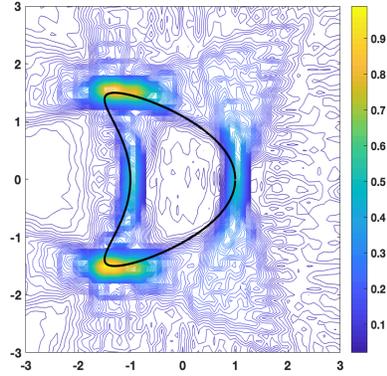}
 \caption{{\bf Example-3.}\, Reconstruction by $I^{(3)}_1$ using $32$ pairs of directions with $(\hx, \hth)$ given in \eqref{hxhth}.}\label{Ex33}
\end{figure}

 \textbf{Example-4:} In this example, the kite with different boundary conditions are considered. We use $I_1^{(1)}$ with 32 incident directions. Fig. \ref{Ex5} gives the results, we choose $\lambda=0.5$ in the impedance boundary and $q=2$ for the penetrable medium.\\

 \begin{figure}[htbp]
  \centering
  \subfigure[\textbf{Sound-hard.}]{
    \includegraphics[height=2in,width=2in]{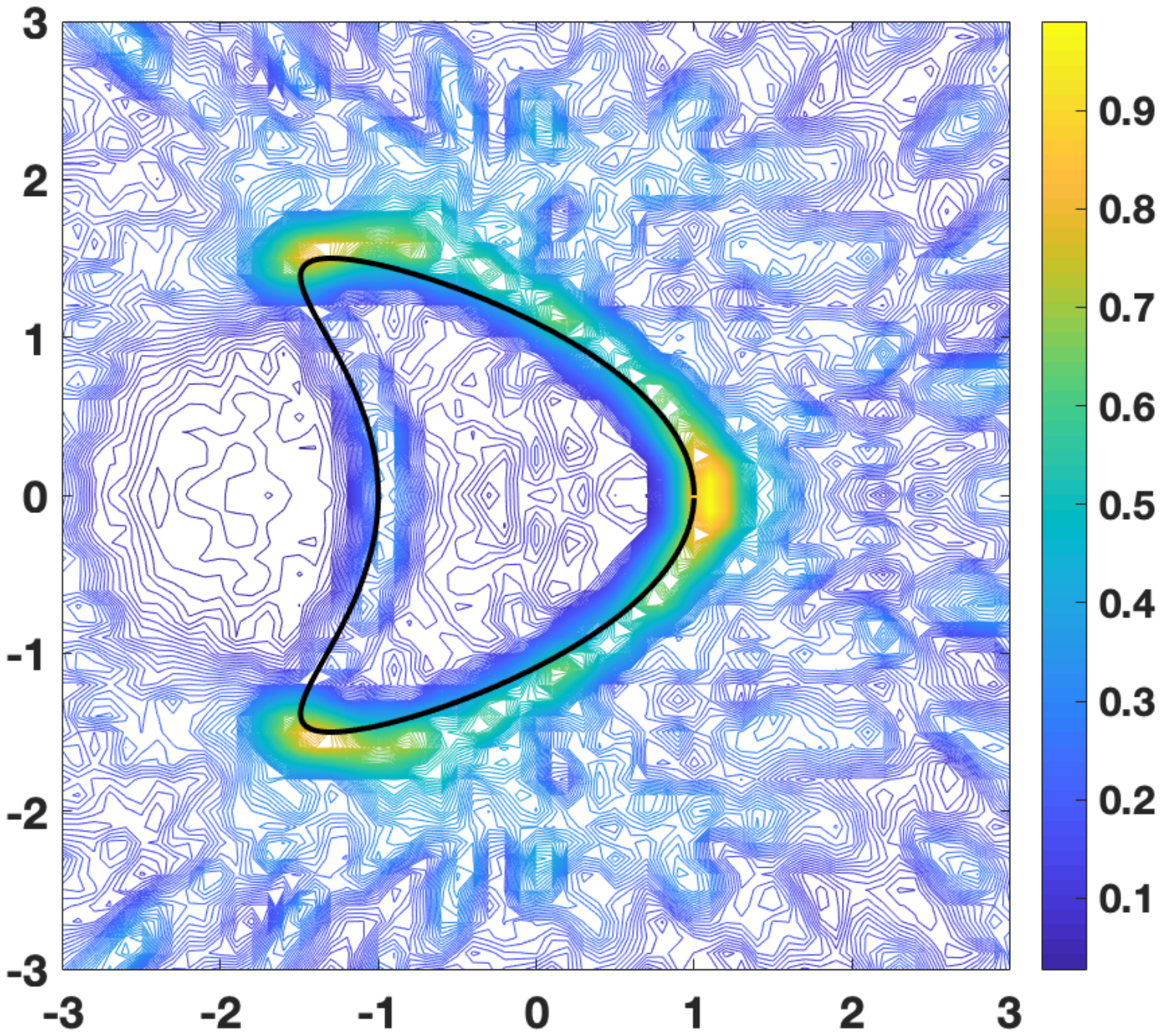}}
  \subfigure[\textbf{Impedance.}]{
    \includegraphics[height=2in,width=2in]{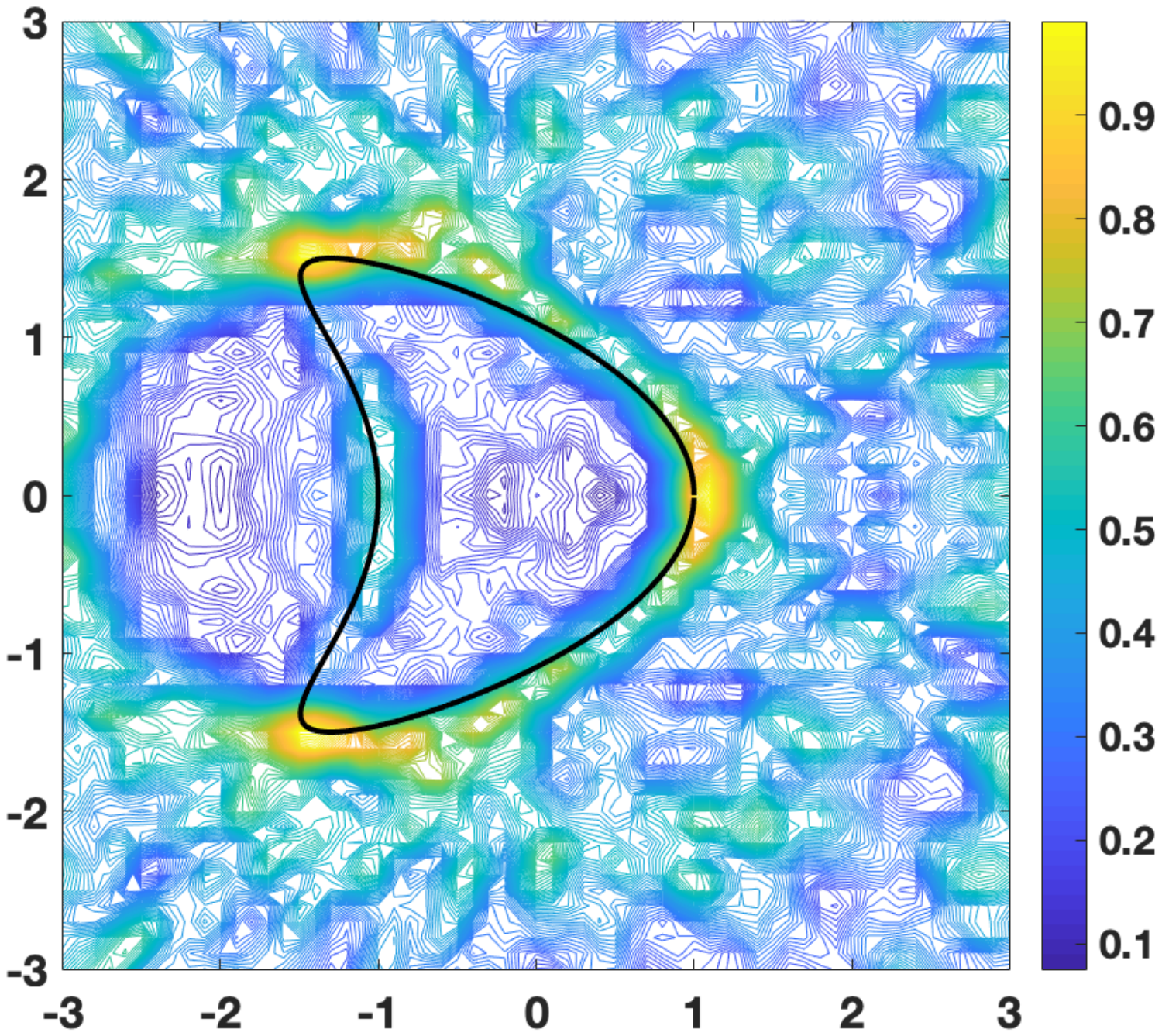}}
     \subfigure[\textbf{Penetrable.}]{
    \includegraphics[height=2in,width=2in]{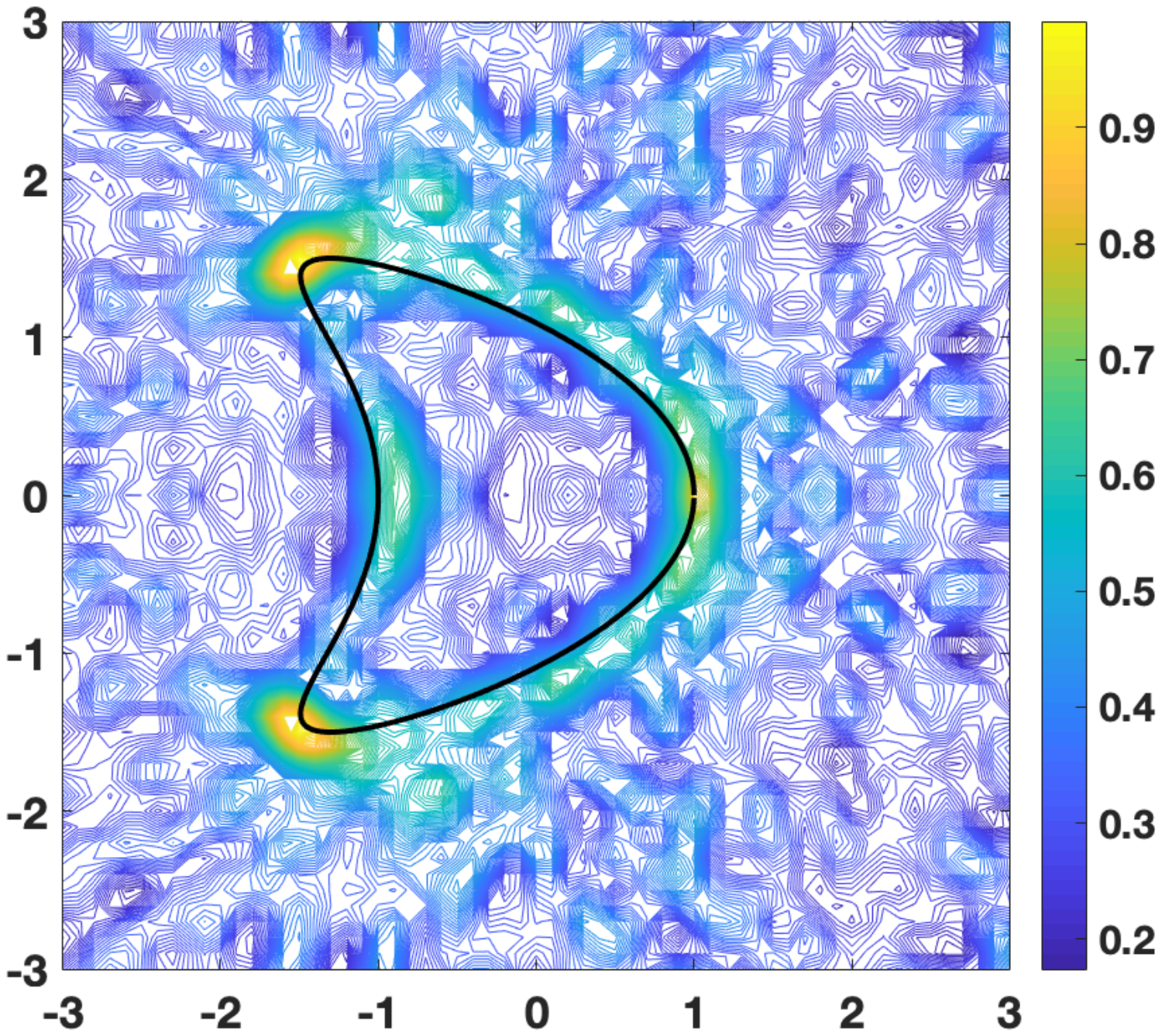}}
\caption{{\bf Example-4.}\, Reconstructions by $I_1^{(1)}$ with $32$ pairs of directions for different boundary conditions.}\label{Ex5}
\end{figure}

\textbf{Example-5:} In this example, we consider the reconstruction of point like scatterers. $160$ equally distributed wave numbers in $[20,100]$ are used.
As suggested in \eqref{I1behaviorpoints}, we take a wider frequency band to ensure that the indicator take the behavior as Dirac delta function.
Fig \ref{Ex6} gives the reconstruction of four points $(1,1),(-1,1),(1,-1),(-1,-1)$ with one or two pairs of directions.
Fig. \ref{Ex7} gives the reconstruction of the word {\bf CAS} with $32$ directions are used. \\

\begin{figure}[htbp]
  \centering
  \subfigure[\textbf{$\hth=[1,0]$.}]{
    \includegraphics[height=2in,width=2in]{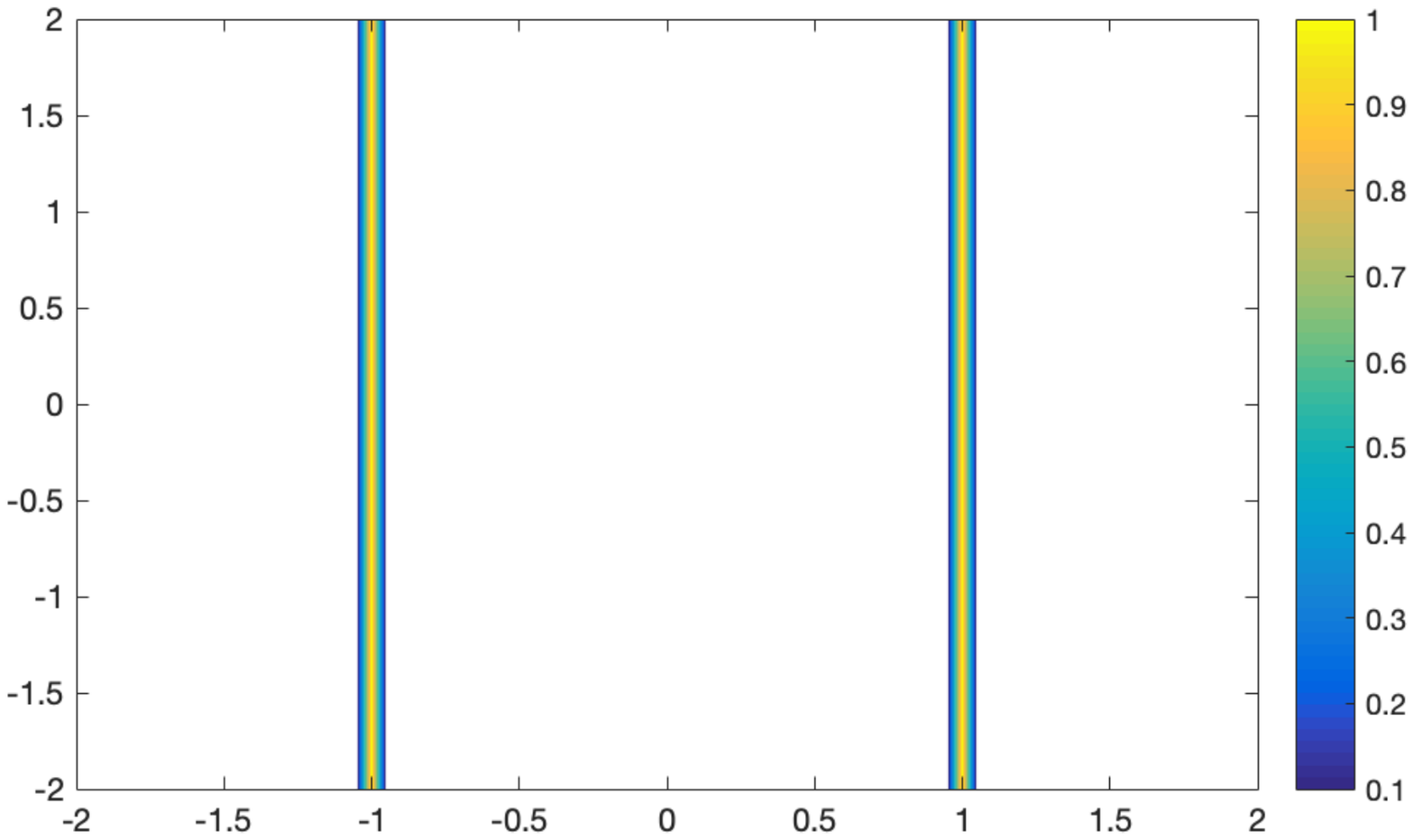}}
  \subfigure[\textbf{$\hth=[0,1]$.}]{
    \includegraphics[height=2in,width=2in]{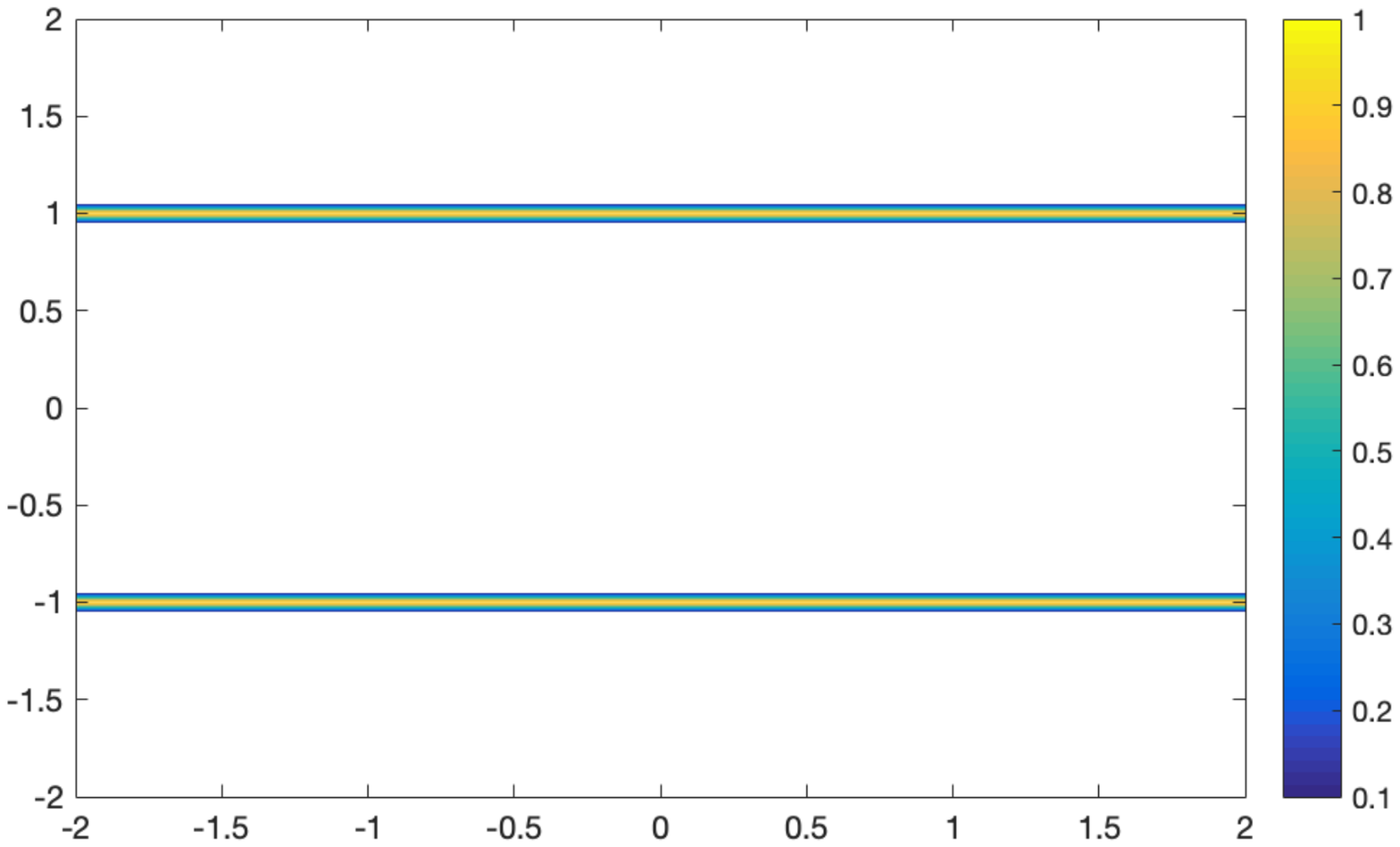}}
  \subfigure[\textbf{$\hth=[1,0],[-1,0]$.}]{
    \includegraphics[height=1.98in,width=2in]{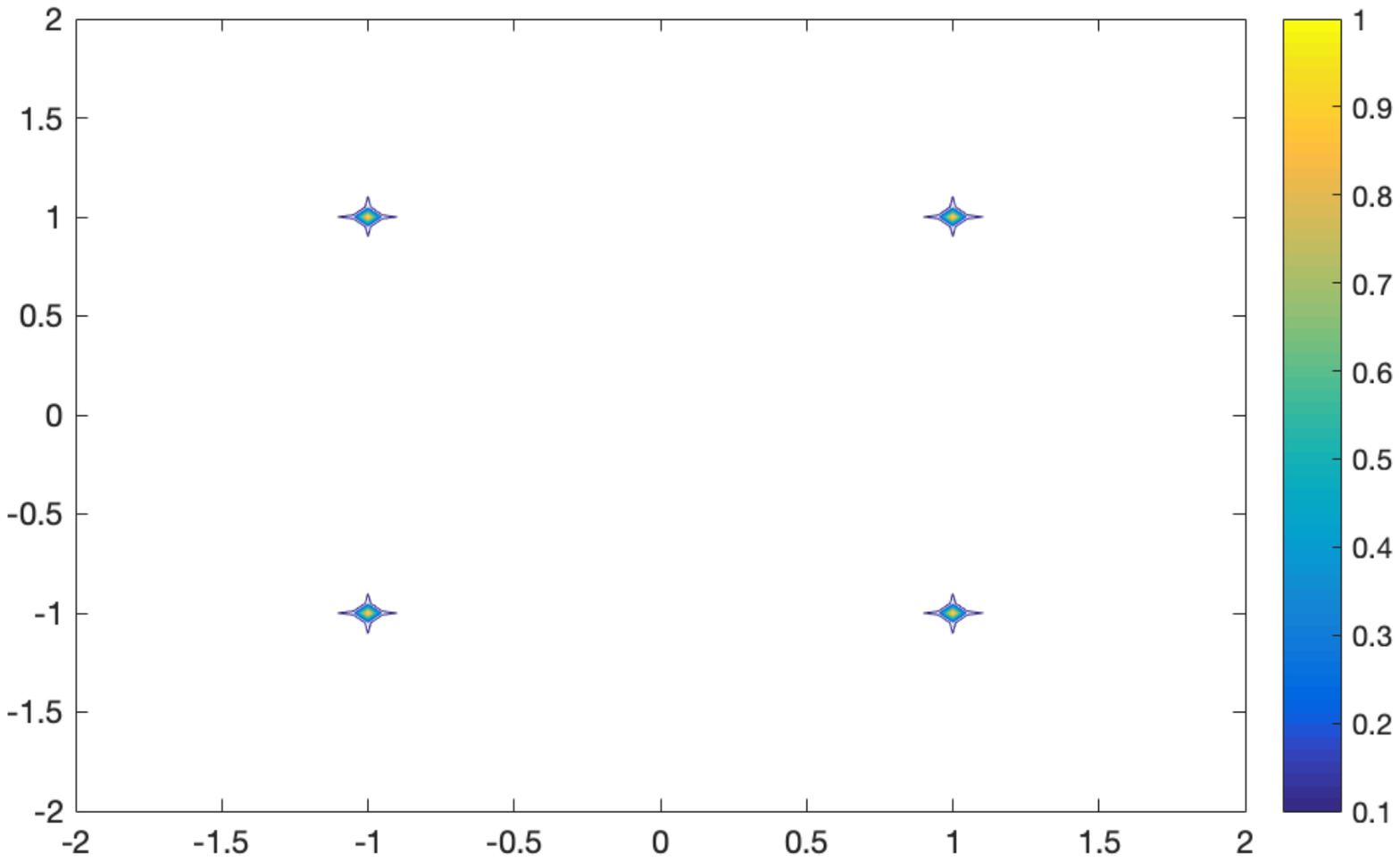}}
  \subfigure[\textbf{$\hth=[1,0]$.}]{
    \includegraphics[height=2in,width=2in]{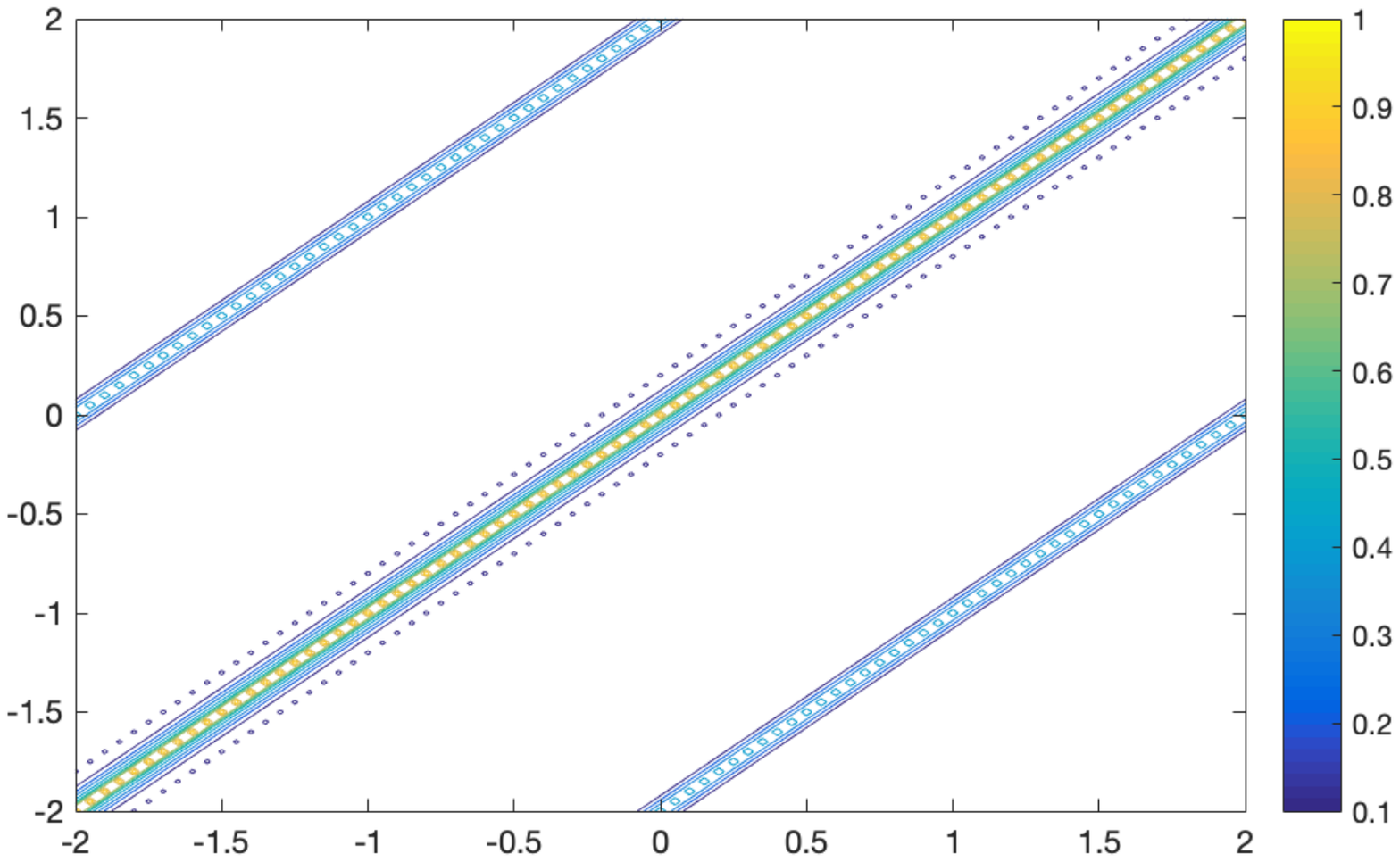}}
  \subfigure[\textbf{$\hth=[\sqrt{2}/2,\sqrt{2}/2]$.}]{
    \includegraphics[height=2in,width=2in]{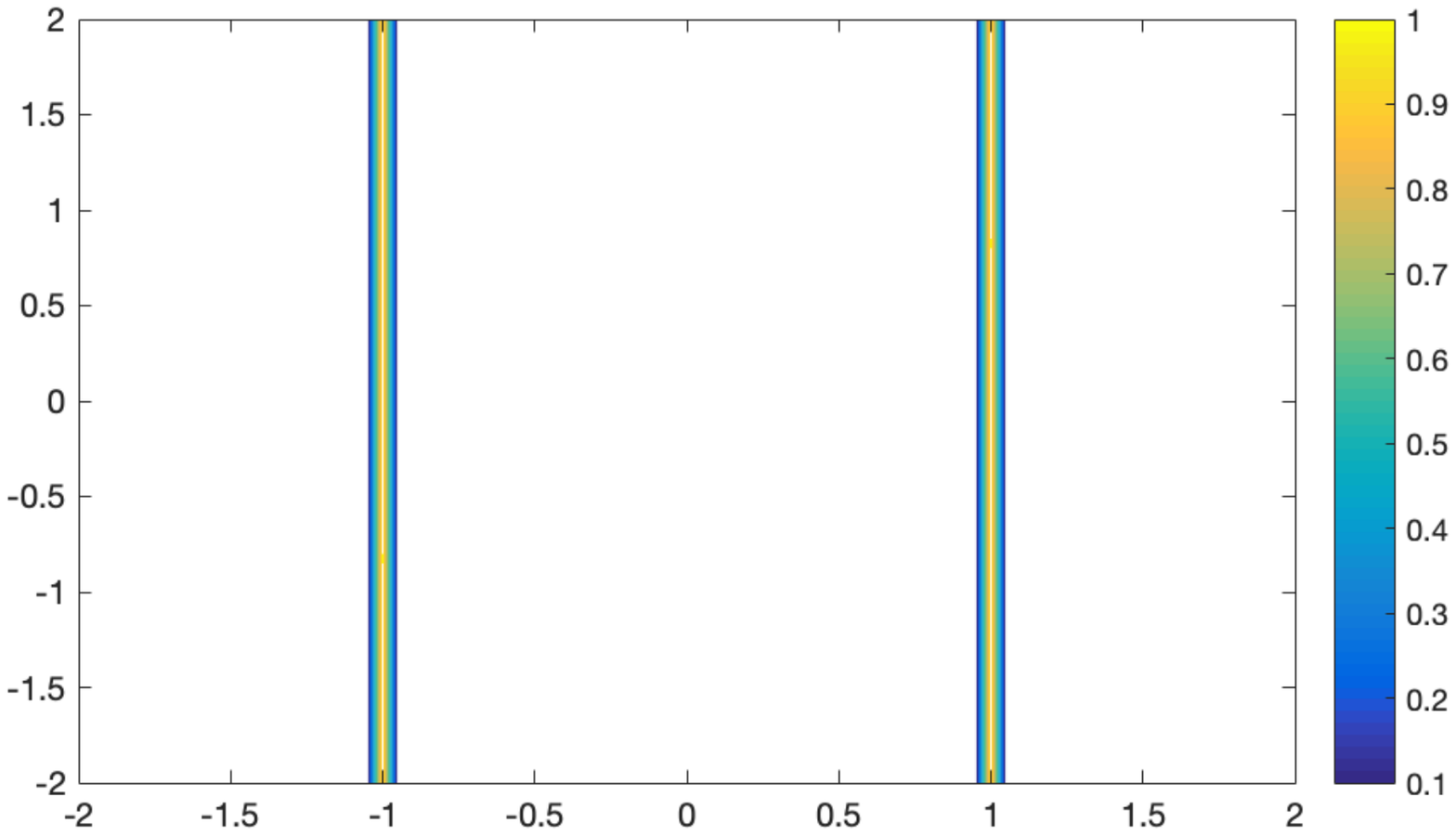}}
  \subfigure[\textbf{$\hth=[1,0],[\sqrt{2}/2,\sqrt{2}/2]$.}]{
    \includegraphics[height=1.98in,width=2in]{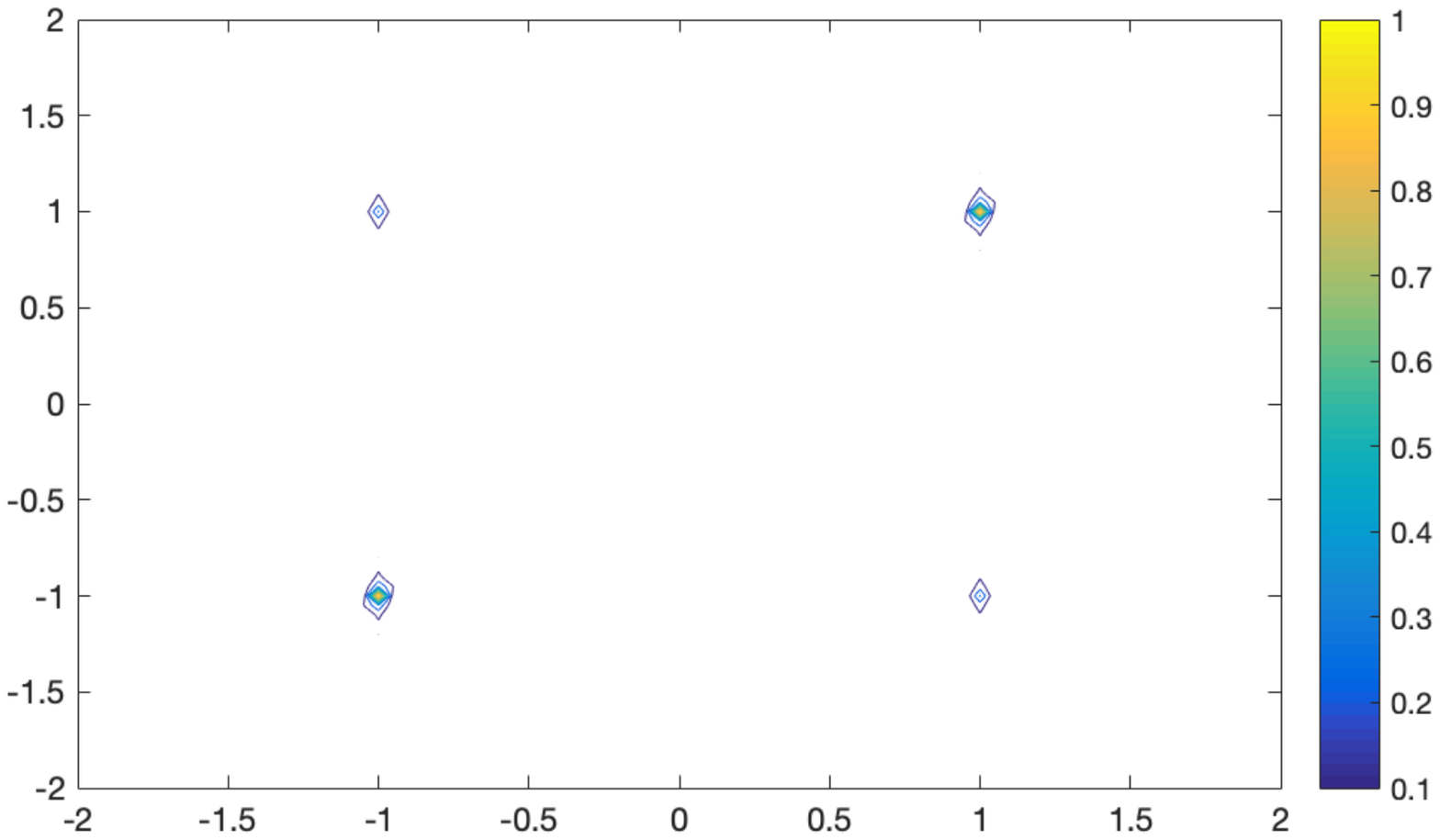}}
\caption{{\bf Example-5.}\, Reconstructions by $I_1^{(1)}$ (top) and $I_1^{(2)}$ (bottom) for four point like scatterers with one or two pairs of directions.}\label{Ex6}
\end{figure}

\begin{figure}[htbp]
  \centering
  \includegraphics[height=2in,width=6in]{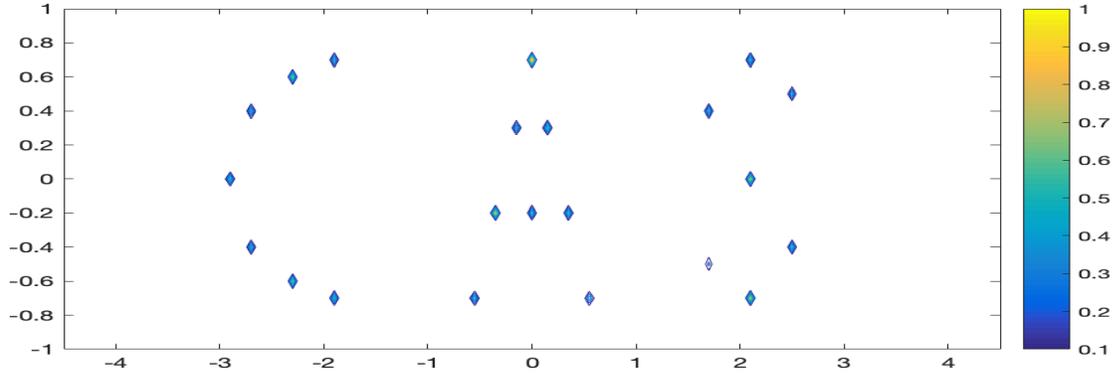}
 \caption{{\bf Example-5.}\, Reconstruction of the word {\bf CAS}, which is the abbreviation for Chinese Academy of Sciences.}\label{Ex7}
\end{figure}

\textbf{Example-6:}
In this example, we consider the scatterers with multiple multiscalar components. The first underlying scatterer is a sound soft kite with $(a,b)=(0,0)$ and a small sound soft circle with $(a,b)=(2.5,2.5)$ and $r=0.1$. To enhance the resolution, $40$ wave numbers are used.
All the numerical results shown in Figures \ref{Ex8} are satisfactory.

\begin{figure}[htbp]
  \centering
  \subfigure[\textbf{$\hth=[1,0]$.}]{
    \includegraphics[height=2in,width=2in]{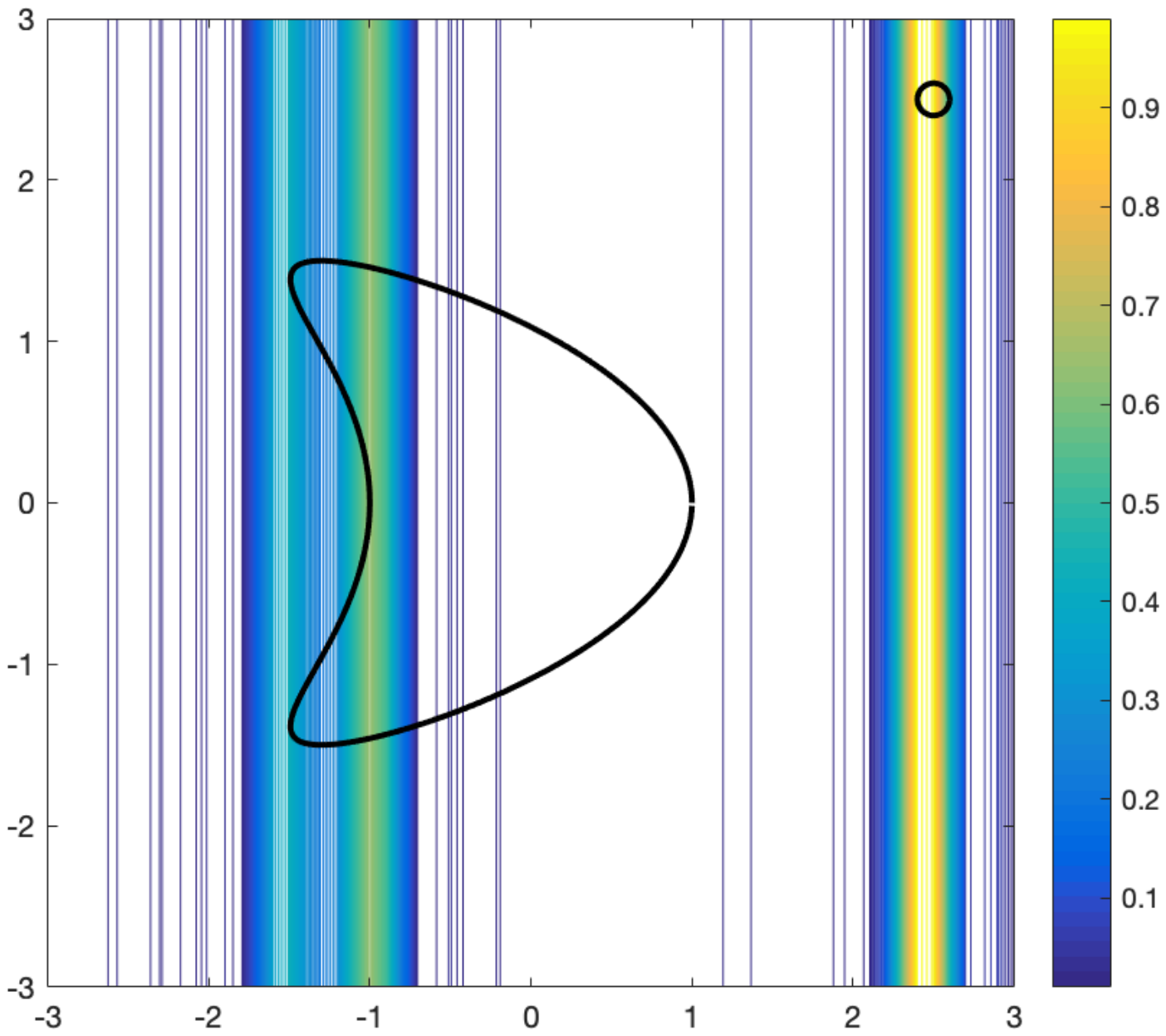}},
  \subfigure[\textbf{4 directions..}]{
    \includegraphics[height=2in,width=2in]{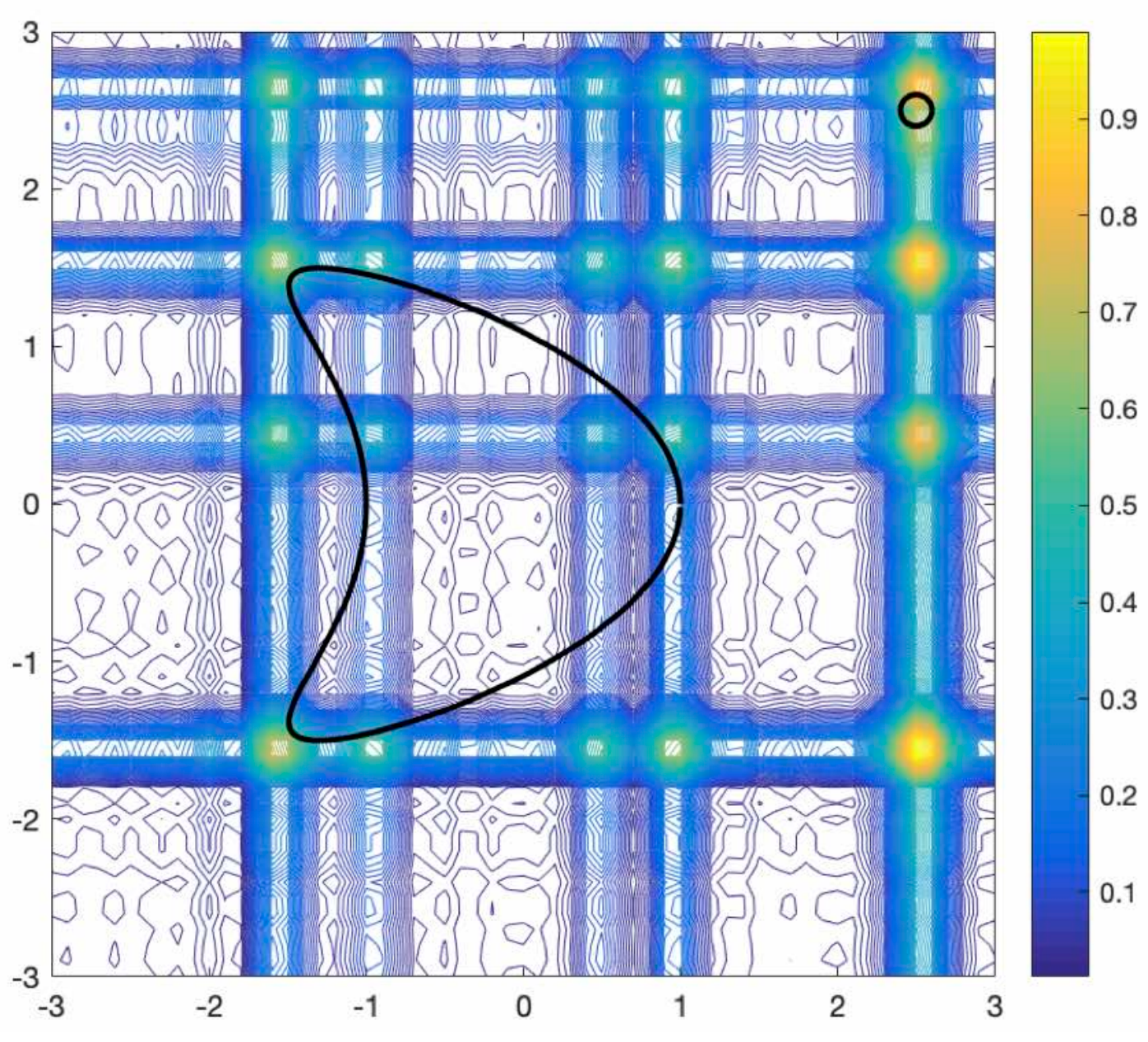}}
  \subfigure[\textbf{32 directions.}]{
    \includegraphics[height=1.98in,width=2in]{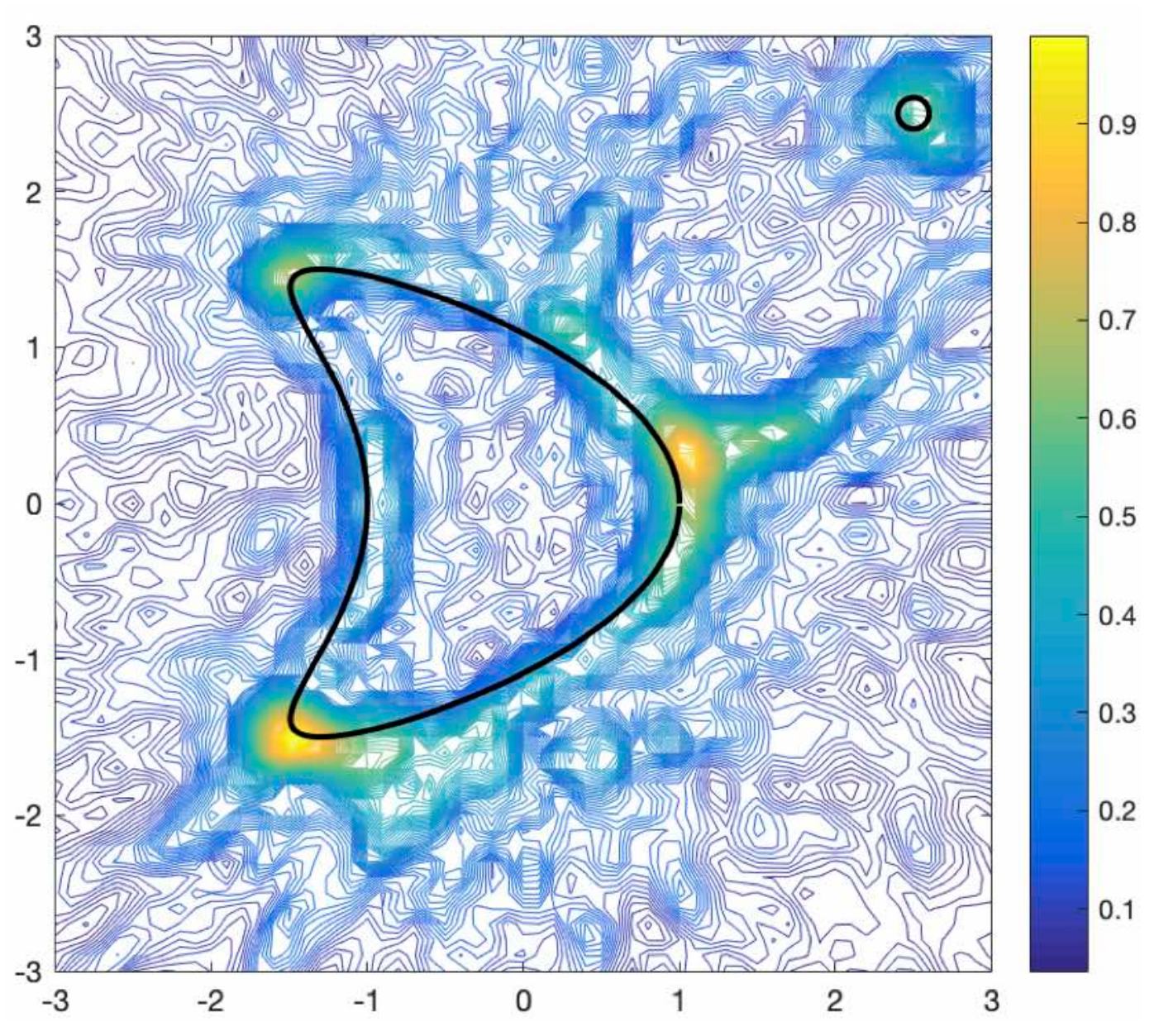}}
\caption{{\bf Example-6.}\, Reconstructions by $I_1^{(1)}$ for multiscalar scatterers.}\label{Ex8}
\end{figure}

We also consider an example with both extended component and point like components. The underlying scatterer is a sound soft kite with $(a,b)=(0,0)$ and three point like scatterers located at $(2.5,2),(2.5,0)$ and $(2.5,-2)$, $40$ wave numbers are used in this example.  Fig. \ref{Ex9} gives the reconstruction.

\begin{figure}[htbp]
  \centering
  \subfigure[\textbf{$\hth=[1,0]$.}]{
    \includegraphics[height=2in,width=2in]{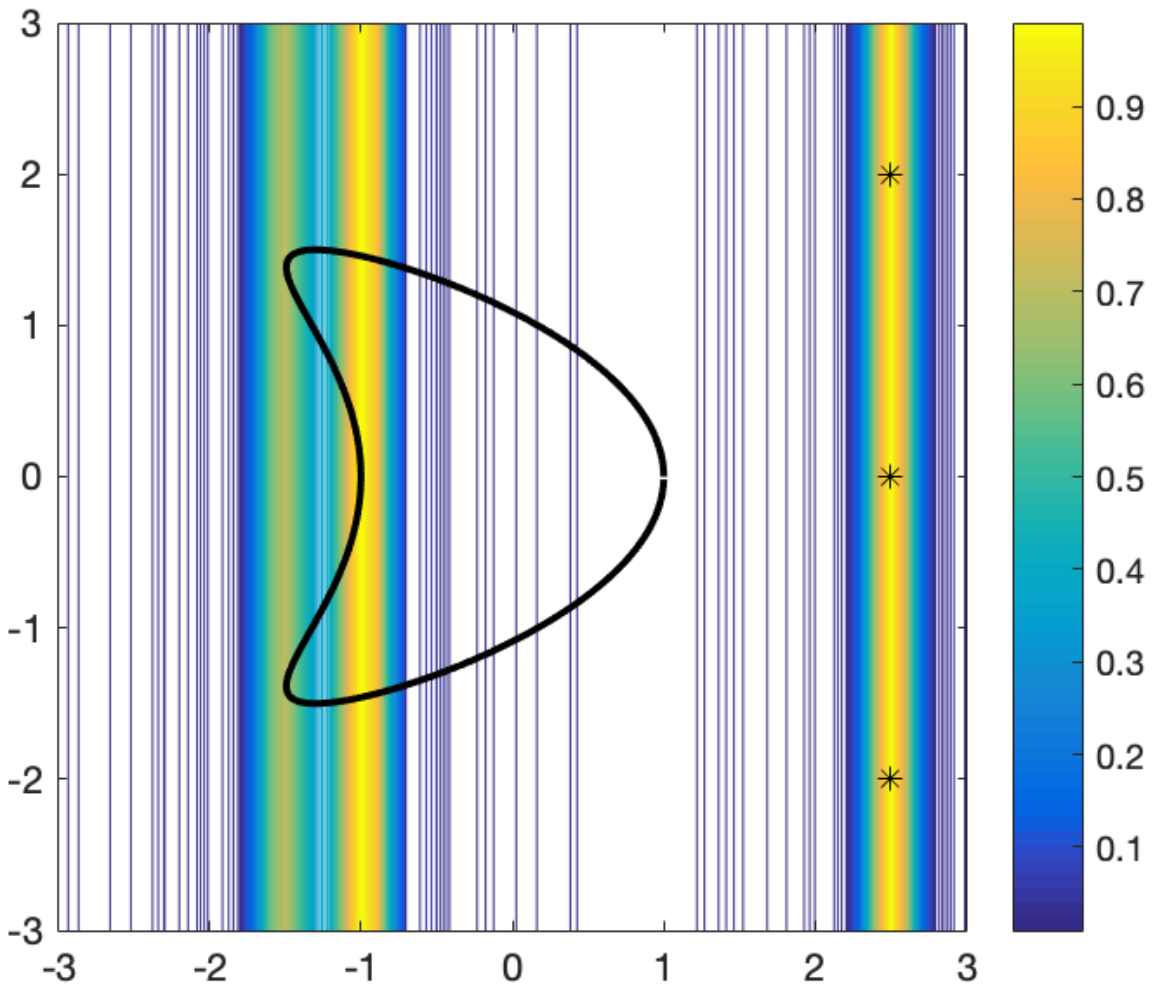}},
  \subfigure[\textbf{4 directions..}]{
    \includegraphics[height=2in,width=2in]{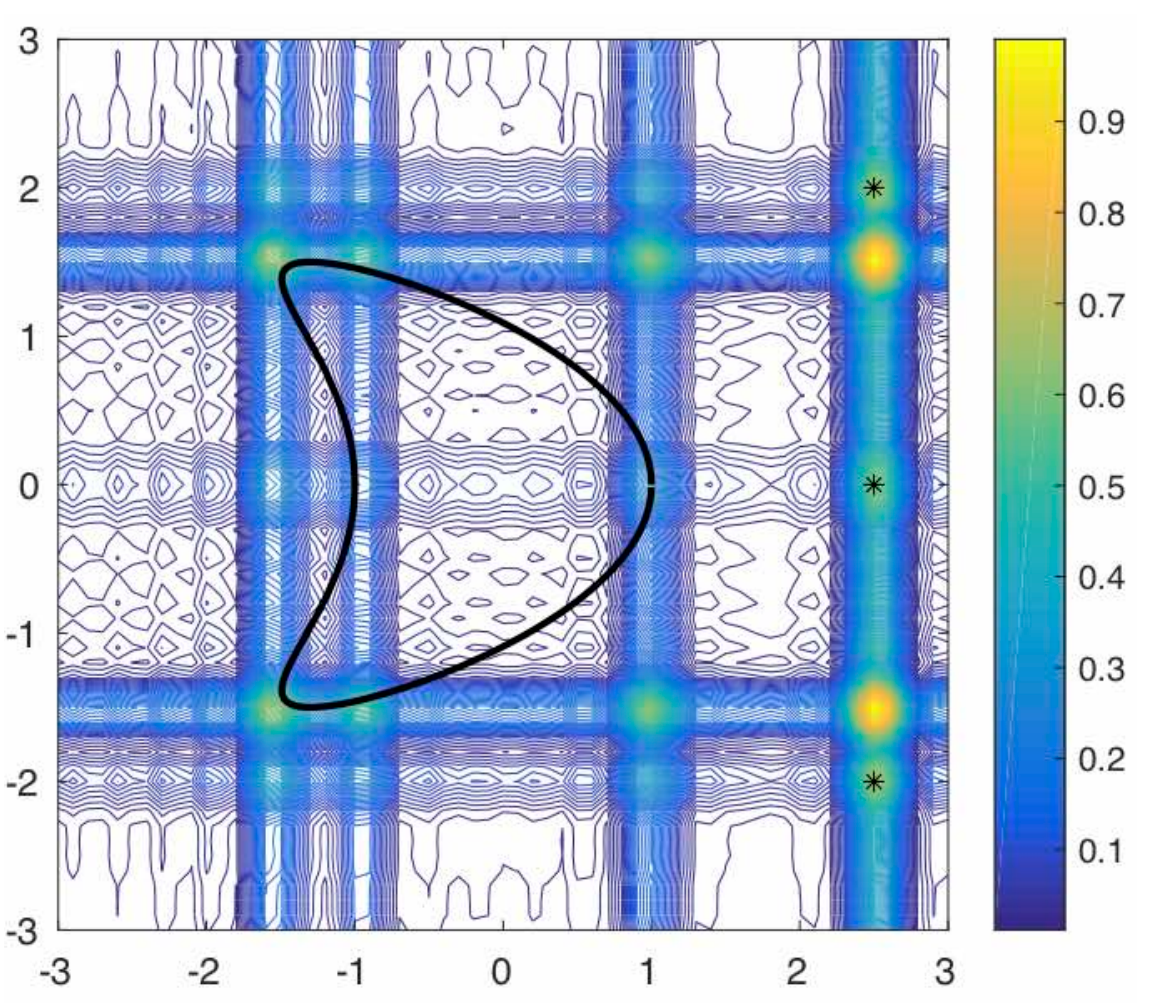}}
  \subfigure[\textbf{32 directions.}]{
    \includegraphics[height=1.98in,width=2in]{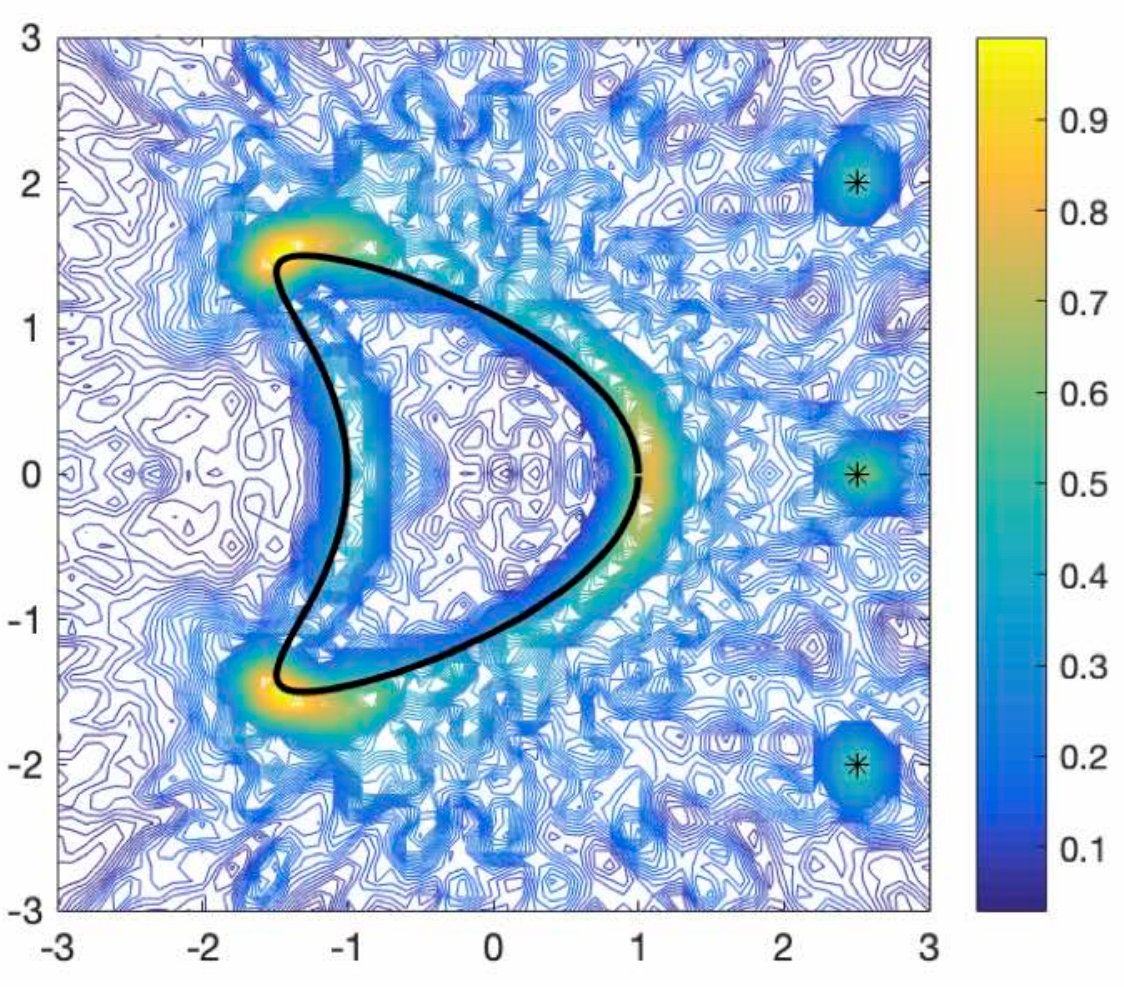}}
\caption{{\bf Example-6.}\, Reconstructions by $I_1^{(1)}$ with different number of incident directions.}\label{Ex9}
\end{figure}

\section{Concluding remarks}

In this paper we propose a sampling method for shape identification in inverse acoustic scattering problem with multi-frequency backscattering far field patterns at sparse observation directions. To our best knowledge, this is the first numerical method for inverse obstacle/medium scattering problems with sparse backscattering data.
The theory foundation has been established based on the weak scattering approximation and the Kirchhoff approximation. In particular, we find that at most $2n$ observation directions are enough to approximately reconstruct a hull of the underlying scatterers. The numerical results show that both the location and shape of the scatterer can be well captured with the increase of the number of the observation directions, even the underlying scatterer has concave part or multiple multiscalar components.

Similar techniques can also be applied to inverse scattering of elastic waves or electromagnetic waves, which shall be addressed in a forthcoming work.

\section*{Acknowledgement}
The research of X. Ji is supported by the NNSF of China under grants 91630313 and 11971468,
and National Centre for Mathematics and Interdisciplinary Sciences, CAS.
The research of X. Liu is supported by the NNSF of China under grant 11971701, and the Youth Innovation Promotion Association, CAS.

\bibliographystyle{SIAM}

\end{document}